 \numberwithin{equation}{section}
\newenvironment{customthm}[1]
  {\innercustomthm}
  {\endinnercustomthm}
\theoremstyle{plain}
\newtheorem{thm}{Theorem}[section]
\newtheorem{cor}[thm]{Corollary}
\newtheorem{lem}[thm]{Lemma}
\newtheorem{prop}[thm]{Proposition}
\theoremstyle{definition}
\newtheorem{ex}[thm]{Example}
\theoremstyle{remark}
\newtheorem{rem}[thm]{Remark}
\newcommand{\R}{\mathbb{R}}
\newcommand{\bp}{\begin{proof}[\ensuremath{\mathbf{Proof}}]}
\newcommand{\bs}{\begin{proof}[\ensuremath{\mathbf{Solution}}]}
\newcommand{\ep}{\end{proof}}
\newcommand{\be}{\begin{equation}}
\newcommand{\ee}{\end{equation}}
\newcommand{\Stwo}{\mathbb{S}^2}
\newcommand{\Snmo}{\mathbb{S}^{n-1}}
\newcommand{\id}{\text{id}}
\begin{document}

\title{On extreme constant width bodies in $\R^3$}

\author{Ryan Hynd\footnote{Department of Mathematics, University of Pennsylvania. This work was supported in part by NSF grant DMS-2350454.}}

\maketitle

\begin{abstract}
We consider the family of constant width bodies in $\R^3$ which is convex under Minkowski addition. 
Extreme shapes cannot be expressed as a nontrivial convex combination of other 
constant width bodies. We show that each Meissner polyhedra is extreme.  We also explain that each constant width 
body obtained by rotating a symmetric Reuleaux polygon about its axis of symmetry is extreme. In addition, we 
conjecture a general characterization of all extreme constant width shapes.  
\end{abstract}

\section{Introduction}
A convex body in Euclidean space has {\it constant width} if the distance between any pair of parallel supporting planes is the same. In what follows, we will only consider convex bodies with constant width equal to one and refer to them simply as constant width bodies or constant width shapes.  We will also identify a planar constant width shape with its boundary curve, which we will call a constant width curve. Simple examples include a circle of radius one half in the plane and a closed ball of radius one half in $\R^3$.  However, there are many other examples as we will see below. 
\begin{figure}[h]
\centering
   \includegraphics[width=.31\textwidth]{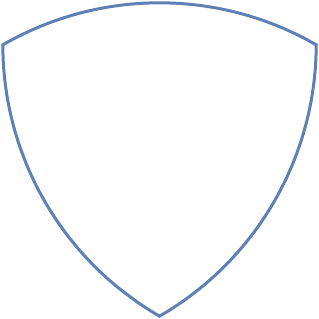}
 \hspace{.1in}
  \includegraphics[width=.31\textwidth]{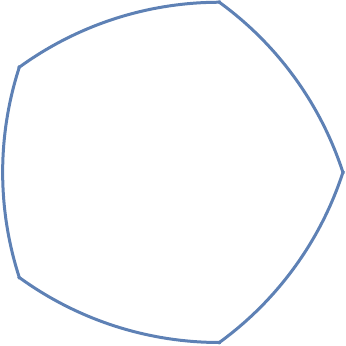}
  \hspace{.1in}
    \includegraphics[width=.31\textwidth]{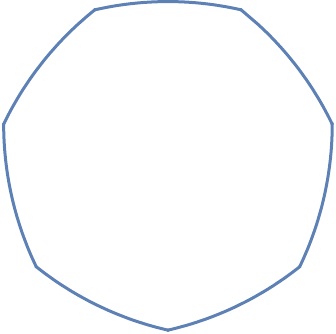}
 \caption{These are Reuleaux polygons, which are constant width curves consisting of finitely many circular arcs of radius one. It turns out that each Reuleaux polygon is extreme. In particular, each one satisfies Kallay's criterion: its radius of curvature is equal to one at circular boundary points and is equal to zero at vertex points.}
 \label{2Dcurves}
\end{figure}

\par It turns out that $(1-\lambda)K_0+\lambda K_1 $
has constant width for each $\lambda\in [0,1]$ and constant width $K_0, K_1\subset \R^n$. That is, 
the collection of constant width shapes in $\R^n$ form a convex set under Minkowski addition. We will say that a constant width 
$K\subset \R^n$ is {\it extreme} if   
$$
K\neq (1-\lambda)K_0+\lambda K_1
$$
for any $\lambda\in (0,1)$ and constant width $K_0, K_1\subset \R^n$ which are not translates of each other.  

\par When $n=2$, Kallay gave a necessary and sufficient condition for a constant width curve to be extreme. His characterization is based on the fact that the radius of curvature of a smooth constant width curve is between zero and one at each boundary point.  Kallay showed that a constant width curve is extreme if and only if its radius of curvature is essentially either zero or one \cite{MR350618}. A simple example of an extreme constant width curve is a Reuleaux polygon, which is a constant width curve consisting of finitely many circular arcs of radius one; see Figure \ref{2Dcurves}. Unfortunately, no such characterization is currently available for $n\ge 3$. 

\par In this note, we will discuss extreme constant width shapes in $\R^3$.  The first class that we will study is the  family of Meissner polyhedra, which was introduced by Montejano and Rold\'an-Pensado \cite{MR3620844}.  The family of Meissner polyhedra include the two Meissner tetrahedra (Figures \ref{MeissTetraO} and \ref{MeissTetraT}), which are conjectured to have least volume among all three-dimensional shapes of constant width \cite{MR920366, MR2844102,MR3930585,MR0123962}.  We will define this class of figures in the following section. We note that  Moreno and Schneider have shown that the two Meissner tetrahedra are extreme \cite{MR2946457}.  One of the contributions of this work is to extend their result to all Meissner polyhedra. 
 
 \begin{customthm}{A}\label{thmA}
Each Meissner polyhedron in $\R^3$ is extreme. 
\end{customthm}
 \begin{figure}[h]
\centering
 \includegraphics[width=.4\textwidth]{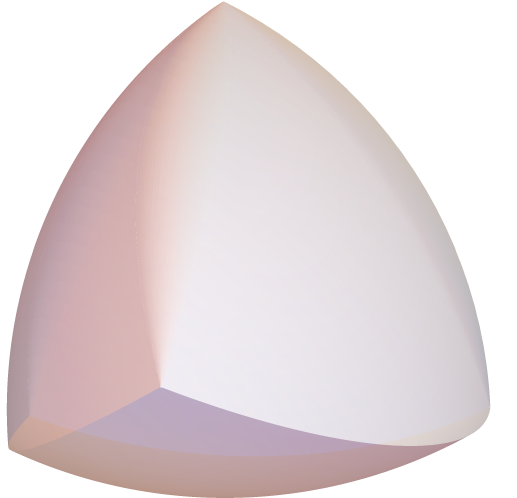}
 \hspace{.4in}
  \includegraphics[width=.38\textwidth]{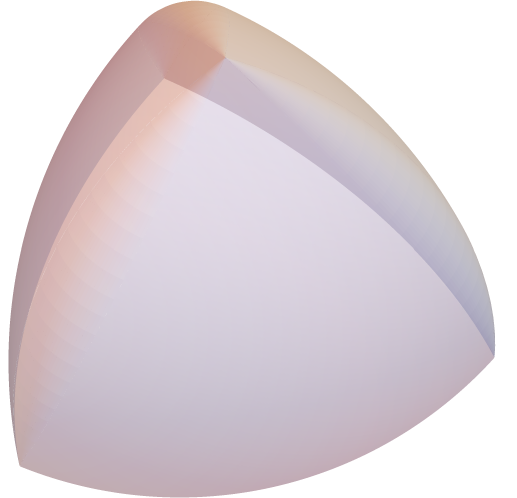}
 \caption{A Meissner tetrahedron, which is an extreme constant width shape in $\R^3$. This shape is designed from a Reuleaux tetrahedron by performing surgery on  three edges that meet in a common vertex. See Figure \ref{ReulTetraFig} below. }\label{MeissTetraO}
\end{figure}
\begin{figure}[h]
\centering
 \includegraphics[width=.45\textwidth]{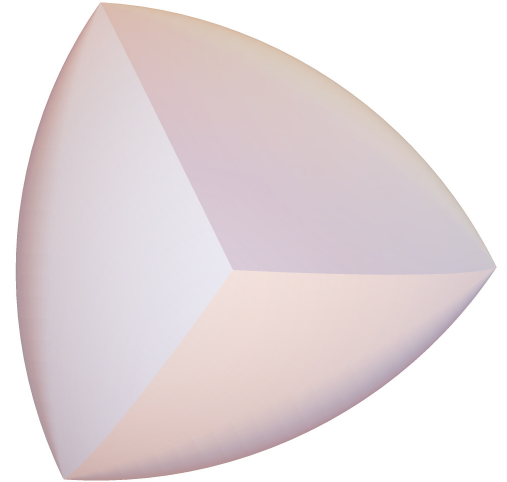}
 \hspace{.4in}
  \includegraphics[width=.43\textwidth]{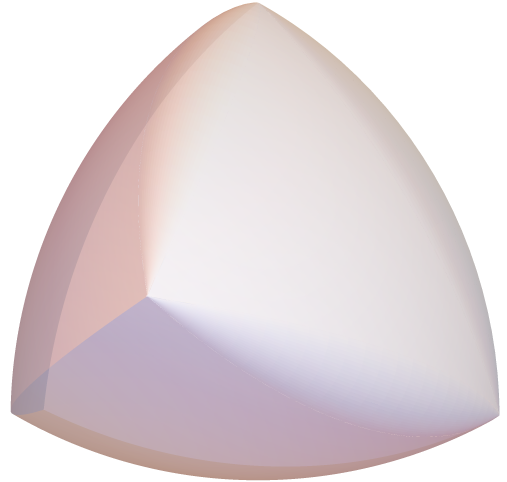}
 \caption{This is another type of Meissner tetrahedron, which is also extreme.  It is constructed from a Reuleaux tetrahedron (Figure \ref{ReulTetraFig}) by performing surgery on three edges which meet in a common face of the Reuleaux tetrahedron. }
 \label{MeissTetraT}
\end{figure}

\par The family of Meissner polyhedra was recently shown to be dense in the space of constant width shapes \cite{MR4775724}; see also \cite{MR296813}.  Here the topology is determined by the Hausdorff distance.   As a result, the space of three-dimensional constant width shapes has a dense collection of  extreme points.  This also holds in plane as Reuleaux polygons have this property.  A first sight, this property might appear to be special. However, it is not out of the ordinary for a closed convex subset of a Banach space to be the closure of its extreme points (see section 2 of \cite{MR115076}).  

 \par A constant width $C\subset \R^2$ which has a line of symmetry can be rotated about this line to generate a constant width body in $K\subset \R^3$.  In this case, we'll say that $C$ {\it generates} $K$. For instance, we can rotate a symmetric Reuleaux polygon about its line of symmetry to obtain a constant width shape in $\R^3$; see Figure \ref{RotatedFigFig}. Conversely, an axially symmetric constant width shape arises this way, as well.  We will show that any axially symmetric constant width shape generated by a Reuleaux polygon is extreme. 
\begin{figure}[h]
\centering
 \includegraphics[width=.4\textwidth]{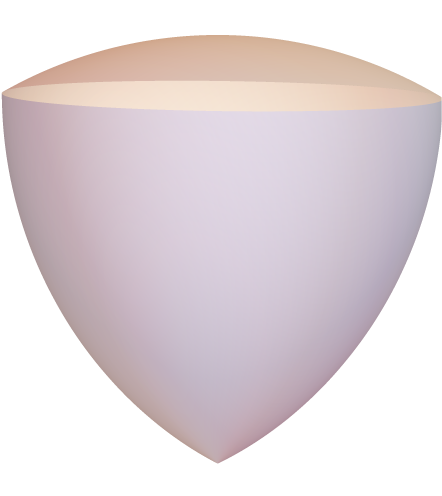}
 \hspace{.4in}
  \includegraphics[width=.4\textwidth]{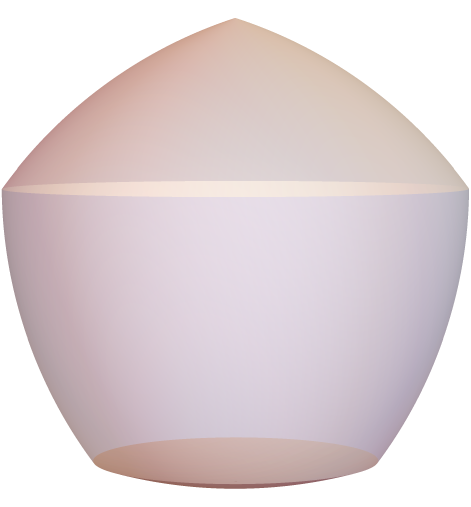}
 \caption{This figure displays a rotated Reuleaux triangle on the left and a rotated Reuleaux pentagon on the right. These are two examples of extreme axially symmetric constant width shapes in $\R^3$.}
 \label{RotatedFigFig}
\end{figure}
 \begin{customthm}{B}\label{thmB}
Each axially symmetric constant width shape in $\R^3$ which is generated by a Reuleaux polygon is extreme.
\end{customthm}
\par  We will prove the theorems above using intersection properties of constant width shapes. Then we will adapt the approach of Kallay who gave his characterization of two dimensional extreme sets in terms of their support function.  This is a function that indicates the location of the supporting halfspaces of a given convex body.  It will be our tool in strengthening Theorem \ref{thmB}. In particular, we will establish the following. 
\begin{customthm}{C}\label{thmC}
An axially symmetric constant width shape in $\R^3$ is extreme if and only if its generating shape in $\R^2$ is extreme. 
\end{customthm}

\par As with constant width curves, the constant width condition for a smooth constant width $K\subset\R^3$ requires that the principal principal radii of curvatures are between zero and one at each boundary point. In general, the boundary of a constant width $K\subset \R^3$ may have singularities. Nevertheless,  we can use the support function to interpret radii of curvature for almost every outward unit normal and these functions are bounded between zero and one.  Here ``almost every" means with respect to the standard spherical measure. We conjecture the following characterization of extreme constant width shapes in analogy to Kallay's characterization of constant width curves. 
\\\\
\noindent {\bf Conjecture.} {\it A constant width body in $\R^3$ is extreme if and only its minimum principal radius of curvature is equal to zero or its maximum principal radius of curvature is equal one for almost every outward unit normal.}
\\
\par This paper is divided into several sections. In the following two sections, we will prove Theorems \ref{thmA} and \ref{thmB}. Next we will take a short detour to recall some useful facts about the support function of a constant width shape. Then we will prove Theorem  \ref{thmC} and collect some supporting evidence for the above conjecture in the final two sections. 
\section{Meissner polyhedra}
 Suppose $X\subset\R^3$ is a finite set with at least four points and whose diameter is equal to one.  We will consider the associated {\it ball polyhedron}
$$
B(X):=\bigcap_{x\in X} B(x).
$$
Here $B(x)$ is the closed ball of radius one centered at $x\in X$. We will also use the above definition of $B(X)$ for general subsets $X$ of $\R^3$ below.  Let us further assume that $X$ is tight in the sense that no $x\in X$ can be removed so that $B(X)=B(X\setminus\{x\})$. 
The boundary of $B(X)$ is 
$$
\partial B(X)=\bigcup_{x\in X}\partial B(x)\cap B(X)
$$
This boundary naturally consists of vertices, circular edges, and spherical faces which we will describe in more detail below.  The references for this material are \cite{MR2593321, MR2343304, MR3930585}. 

\par A {\it face} of $ B(X)$ is $\partial B(x)\cap B(X)$ for a given $x\in X$. It turns out there is exactly one face per $x\in X$.  A {\it principal vertex} of $ B(X)$ is a point $y\in B(X)$ which belongs 
to three or more faces of $ B(X)$.  A {\it dangling vertex} of $ B(X)$ is a point $x\in X$ which belongs to exactly two faces of $ B(X)$. We will denote the set of principal and dangling vertices of $ B(X)$ 
as $\text{vert}(B(X))$. 

\par Recall that for $a,b\in \R^3$ with $|a-b|\le 2$, $\partial B(a)\cap \partial B(b)$ consists of all $z\in \R^3$ with 
$$
\displaystyle\left(z-\frac{a+b}{2}\right)\cdot (a-b)=0\quad \text{and}\quad \displaystyle\left|z-\frac{a+b}{2}\right|=\sqrt{1-\left|\frac{a-b}{2}\right|^2}.
$$
In particular, $\partial B(a)\cap \partial B(b)$ is a circle.  The {\it edges} of $B(X)$ are the connected components of $\partial B(x)\cap \partial B(y)\cap B(X)\setminus X$ as $x$ and $y$ range over 
distinct points of $X$.  It is known that the number of vertices $V$, edges $E$, and faces $F$ of $B(X)$ satisfy the Euler relation 
$$
V-E+F=2.
$$
\par We'll say that a pair $\{x,y\}\subset X$ is diametric if $|x-y|=1$. A seminal theorem due independently to Gr\"unbaum \cite{MR87115}, Heppes \cite{MR87116}, and Straszewicz \cite{MR0087117} is that $X$ has at most $2\#X-2$ diametric pairs.  When $X$ has $2\#X-2$ diametric pairs,  we will say that $X$ is {\it extremal}.  A fundamental result of Kupitz, Martini, and Perles asserts that $X$ is extremal if and only if 
$$
X=\text{vert}(B(X))
$$
\cite{MR2593321}.  Therefore, $X$ is extremal if and only if the centers which determine the ball polyhedron $B(X)$ are also the vertices of this ball polyhedron. 

\par The Euler relation above implies that if $\#X=m$, then $B(X)$ has $2m-2=2(m-1)$ edges.  It turns out that the edges of $B(X)$ are naturally grouped into pairs. In particular, for an edge $$e\in \partial B(x)\cap \partial B(y)\cap B(X)$$ with endpoints $x',y'\in X$, there is a unique {\it dual edge} $$e'\in \partial B(x')\cap \partial B(y)\cap B(X)$$ with endpoints $x,y\in X$.

\par When $X$ is extremal, we will call the corresponding ball polyhedron
a {\it Reuleaux polyhedron}.   The simplest example of an extremal set is the set of vertices $\{x_1, x_2, x_3, x_4\}$ of a regular tetrahedron in $\R^3$ of side length one.  
The corresponding Reuleaux polyhedron is known as a Reuleaux tetrahedron since it has four vertices, six edges, and four faces like a regular tetrahedron in $\R^3$. See Figure \ref{ReulTetraFig}.
 \begin{figure}[h]
\centering
 \includegraphics[width=.41\textwidth]{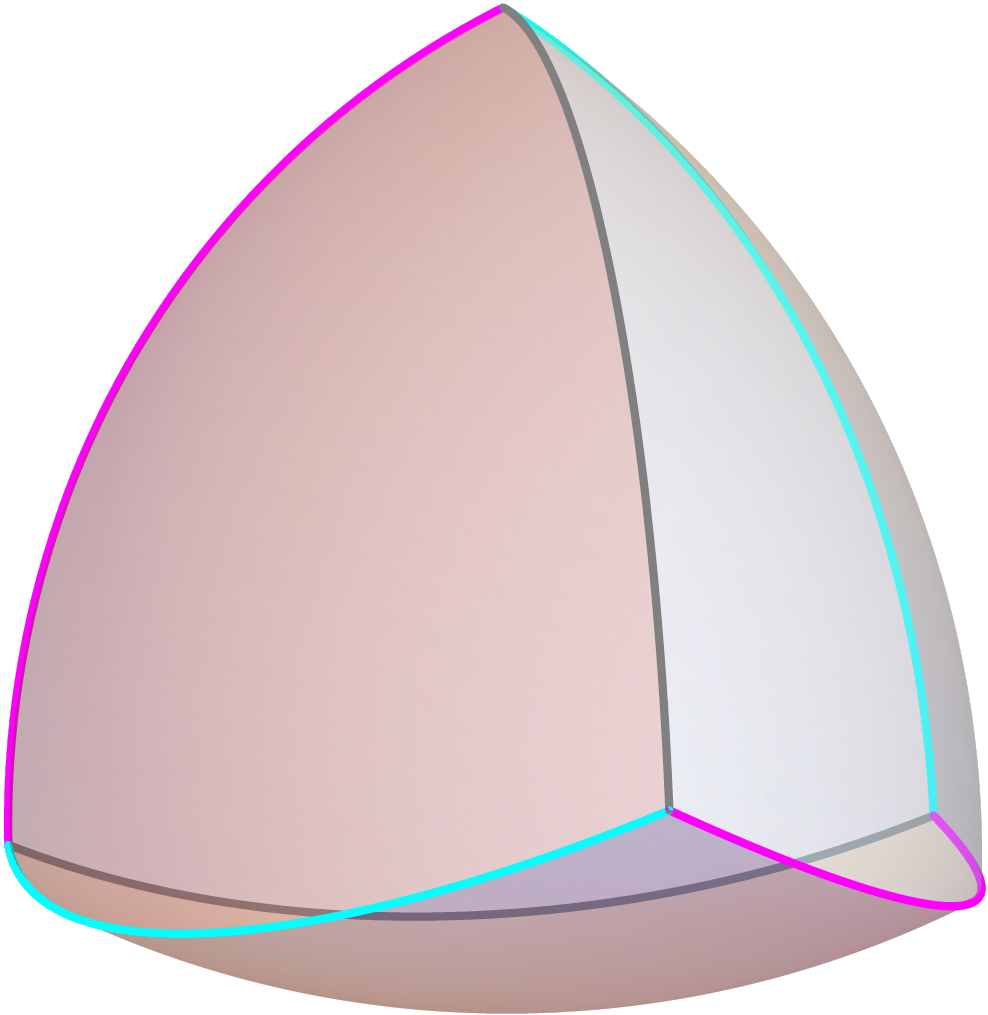}
 \hspace{.4in}
  \includegraphics[width=.45\textwidth]{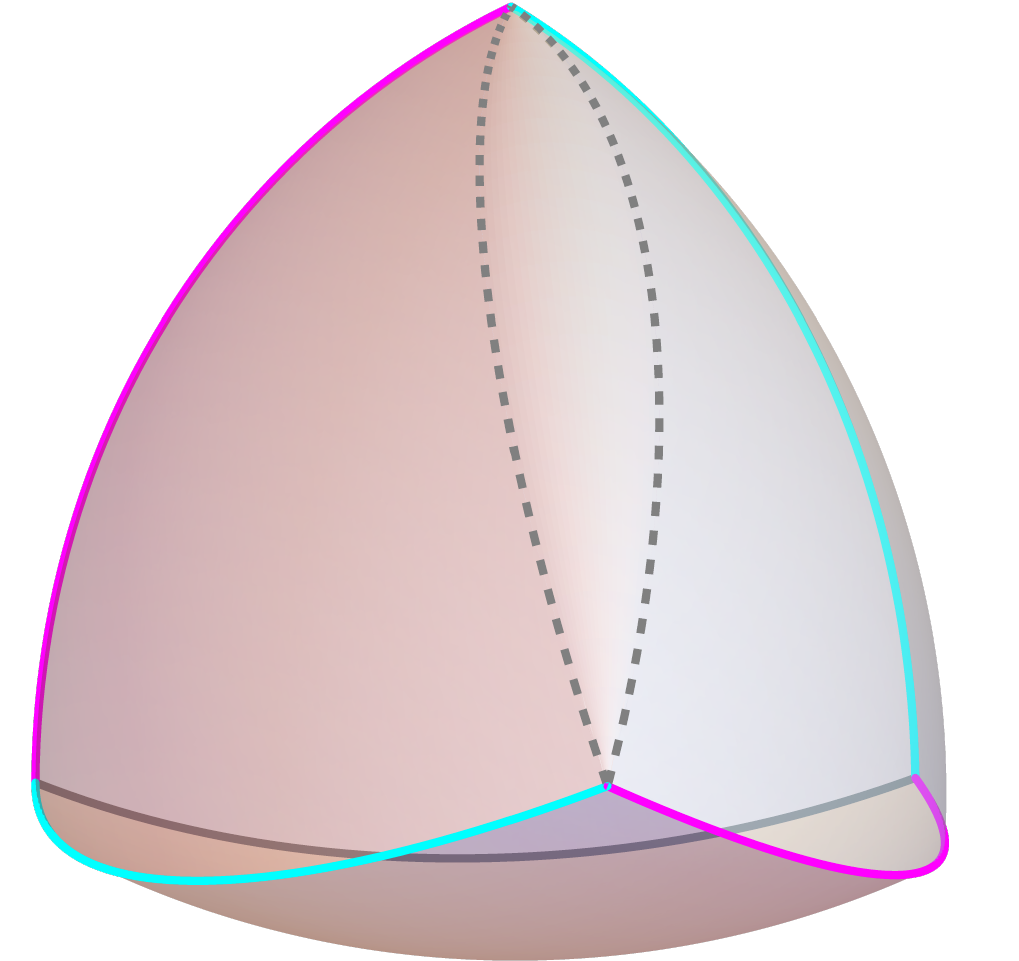}
 \caption{The figure on the left is a Reuleaux polyhedron $B(X)$, where $X$ is the set of vertices of a regular tetrahedron. Note that each dual edge pair of $B(X)$ is labeled with the same color. The figure on the right is the shape obtained from $B(X)$ by performing surgery on $\partial B(X)$ near one of its dual edges. The dashed curves are circular arcs of radius one which join two vertices of $B(X)$. The surgery procedure is to replace the region of $\partial B(X)$ bounded by these arcs with the surface obtained by rotating one of the arcs into the other about the line passing through the two vertices; this surface is a piece of a spindle torus, which is described in the appendix.}
 \label{ReulTetraFig}
\end{figure}
 \begin{figure}[h]
\centering
   \includegraphics[width=.45\textwidth]{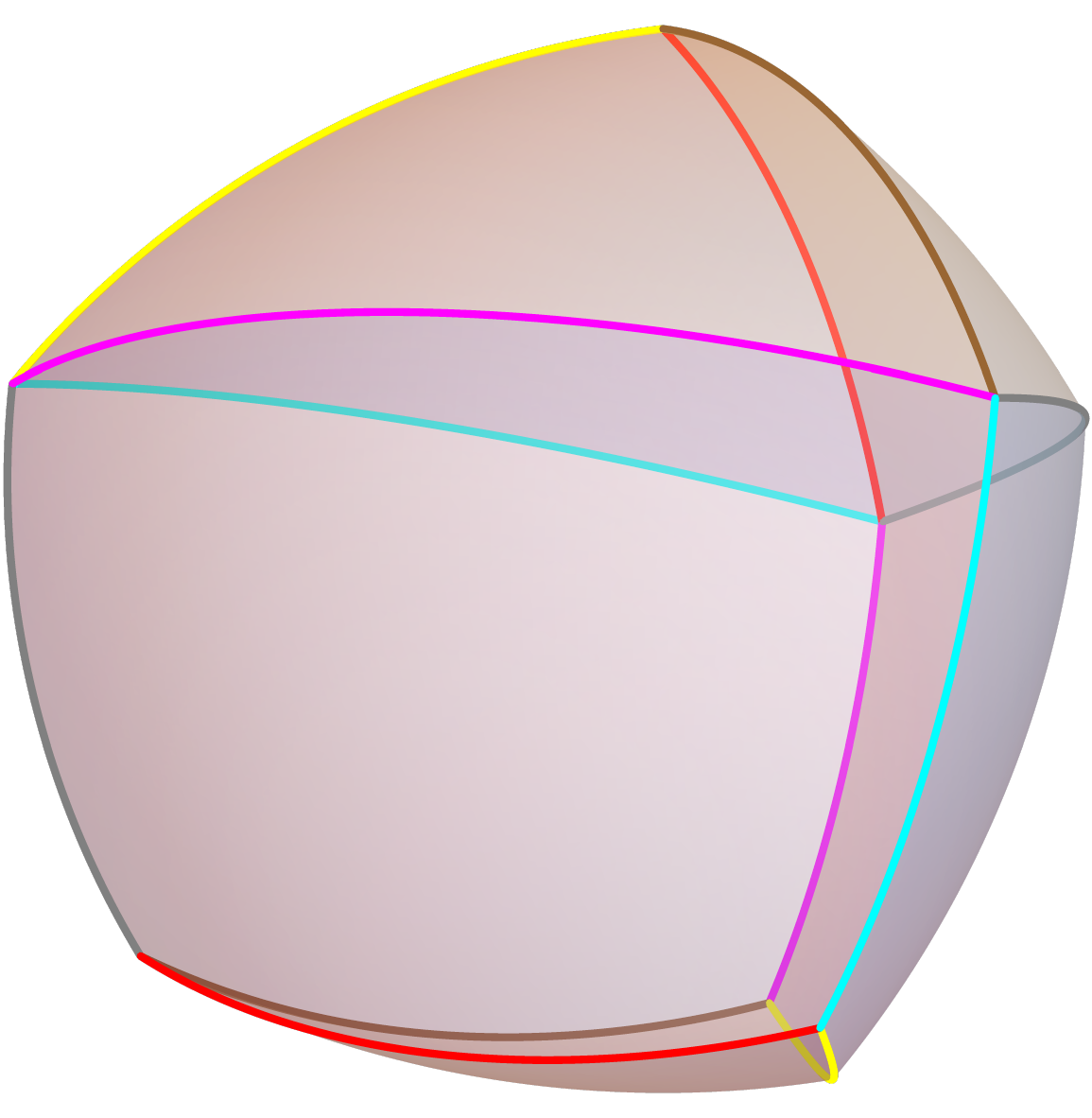}
 \hspace{.4in}
  \includegraphics[width=.45\textwidth]{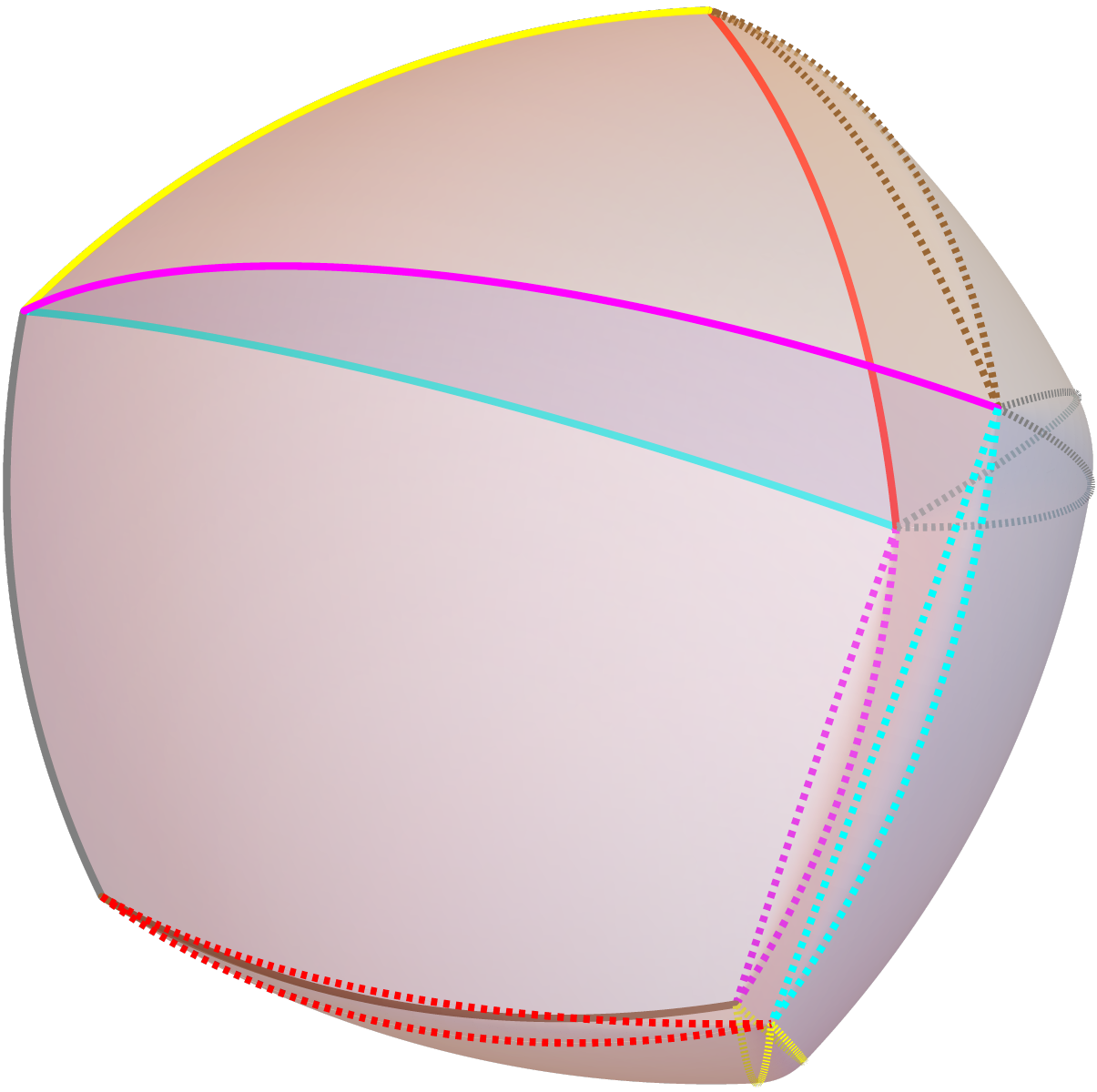}
 \caption{This is an example of a Reuleaux polyhedron $B(X)$ on the left and a corresponding Meissner polyhedron $M$ on the right. Again we have labeled the dual edge pairs of $B(X)$ with the same color.  Also note that 
 we have indicated the surgery bounds on the boundary of $M$ with dashed arcs. }
 \label{ElonTetraFig}
\end{figure}
\par A Reuleaux tetrahedron $B(X)$ does not have constant width.  However, Meissner and Shilling showed how to perform surgery on the boundary of $B(X)$ near three edges which  bound a common face or which meet in a common vertex to obtain two shapes which have constant width \cite{Meissner}.  Namely, they identified a portion of the boundary near a given edge and replaced it with a corresponding part of a spindle torus; see Figure \ref{ReulTetraFig} above and Figure \ref{SpindleFigure} in the appendix. The resulting figures are the two Meissner tetrahedra; see Figures \ref{MeissTetraO} and \ref{MeissTetraT} above.   Recently, Montejano and Rold\'an-Pensado figured out how to generalize Meissner and Shilling's construction to all Reuleaux polyhedra and obtain a new family of constant width shapes which we call Meissner polyhedra \cite{MR3620844}. 

\par Namely, they argued that if $B(X)$ is a Reuleaux polyhedron, a constant width body can be obtained by performing surgery near one edge in every dual edge pair. More specifically, suppose $X\subset \R^3$ has $m\ge 4$ points and has diameter one.  Then $B(X)$ has $m-1$ dual edge pairs 
$(e_1,e_1'),\dots, (e_{m-1},e_{m-1}')$. The {\it Meissner polyhedron} $M$ obtained by performing surgery on $B(X)$ near edges $e_1',\dots, e_{m-1}'$ is given by
$$
M=B\left(X\cup e_1\cup\cdots \cup e_{m-1}\right)
$$
(as described in section 4 of \cite{MR4775724}).  See Figure \ref{ElonTetraFig} for an example.  A detail we will use in our proof of Theorem \ref{thmA} below is that $X\cup e_1\cup\cdots \cup e_{m-1}\subset M$. 
 
 \par In addition, we will employ two basic facts about constant width shapes. First, a convex body $K$ has constant width if and only if $K=B(K)$.  Second, if $K_1$ and $K_2$ are constant width shapes with $K_1\subset K_2$, then $K_1=K_2$.  
 
\begin{proof}[Proof of Theorem \ref{thmA} ]  Suppose $X=\{x_j\}^m_{j=1}$ is an extremal subset of $ \R^3$.  Then $B(X)$ has dual edge pairs 
$(e_1,e_1'),\dots, e_{m-1},e'_{m-1})$ and $M=B(X\cup e_1\cup\dots\cup e_{m-1})$ is a Meissner polyhedron. We will argue that $M$ is extreme. To this end, we suppose 
\be\label{extremalequality}
M=(1-\lambda)M_0+\lambda M_1
\ee
for some $\lambda\in (0,1)$ and constant width $M_0, M_1\subset \R^3$. Recall that each $x_j\in M$.  In view of \eqref{extremalequality}, there are $x^0_j\in M_0$ and $x^1_j\in M_1$ with 
$$
x_j=(1-\lambda)x_j^0+\lambda x_j^1
$$
for each $j=1,\dots, m$. We set $X^0:=\{x^0_j\}^m_{j=1}\subset M_0$ and $X^1:=\{x^1_j\}^m_{j=1}\subset M_1$,
and we note that $\text{diam}(X^0)\le 1$ and $\text{diam}(X^1)\le 1$. 

\par An important observation is that if $|x_i-x_j|=1$, then 
\begin{align}
1&=|(1-\lambda)(x^0_i-x^0_j)+\lambda(x^1_i-x^1_j)|\\
&\le (1-\lambda)|x^0_i-x^0_j|+\lambda |x^1_i-x^1_j|\\
&\le (1-\lambda) \cdot 1+\lambda \cdot 1\\
&=1.
\end{align}
It follows that $|x^0_i-x^0_j|=|x^1_i-x^1_j|$; and since the closed unit ball in $\R^3$ is strictly convex, 
\be\label{xeyexjayidentity}
x_i-x_j=x^0_i-x^0_j=x^1_i-x^1_j.
\ee
Furthermore, $X^0$ and $X^1$ are extremal.  

\par Observe that  \eqref{xeyexjayidentity} is equivalent to 
\be\label{xeyexjayidentity2}
\begin{cases}
x_i-x^0_i=x_j-x^0_j\\
x_i-x^1_i=x_j-x^1_j
\end{cases}
\ee
whenever $|x_i-x_j|=1$. We claim that this identity actually holds for all $x_i$ and $x_j$. Since the graph consisting of the vertices $X$ and the edges of $B(X)$ is connected (Theorem 6.1 of \cite{MR2593321}), it suffices to show that \eqref{xeyexjayidentity2} holds for any $x_i$ and $x_j$ connected by an edge of $B(X)$. 

\par Note that if $x_i$ and $x_j$ are connected by an edge $e\subset \partial B(x_k)\cap \partial B(x_\ell)\cap B(X)$, then $|x_i-x_k|=|x_j-x_k|=1$  and \eqref{xeyexjayidentity2} holds for $x_i$ and $x_k$ and for $x_k$ and $x_j$. As a result, 
\be
\begin{cases}
x_i-x^0_i=x_k-x^0_k=x_j-x^0_j\\
x_i-x^1_i=x_k-x^1_k=x_j-x^1_j.
\end{cases}
\ee
We conclude that \eqref{xeyexjayidentity2} holds for all $x_i$ and $x_j$. 

\par  In addition, $B(X^0)$ has the same edges as $B(X)$ up to a translation of $c^0$.  That is, the edge of 
$B(X^0)$ are $(e_1-c^0,e'_1-c^0),\dots, (e_{m-1}-c^0,e'_{m-1}-c^0)$. We claim that
\be\label{edgeclaim}
e_j-c^0\subset M_0\text{ for $j=1,\dots,m-1$. }
\ee
To this end, we let $\gamma: [0,T]\rightarrow M$ be a parametrization of $e_j\subset \partial B(x)\cap \partial B(y)\cap B(X)$ for some $j$.  By \eqref{extremalequality}, there are $\gamma^0,\gamma^1: [0,T]\rightarrow M_0,M_1$ that satisfy
$$
\gamma(t)=(1-\lambda)\gamma^0(t)+\lambda \gamma^1(t)
$$
for $t\in [0,T]$. As $x=(1-\lambda)x^0+\lambda x^1$ for some $x^0\in X^0$ and $x^1\in X^1$, 
$$
\gamma(t)-x=(1-\lambda)(\gamma^0(t)-x^0)+\lambda (\gamma^1(t)-x^1).
$$
And since $|\gamma(t)-x|=1$,
$$
\gamma(t)-x=\gamma^0(t)-x^0.
$$

\par As $x=x^0+c^0$, $\gamma(t)-c^0=\gamma^0(t)\in M_0$ for all $t\in [0,T]$. Thus, $e_j-c^0\subset M_0$. Since $j$ was arbitrary,
the claim \eqref{edgeclaim} holds and   
$$
M_0\supset  X\cup e_1\cup\dots\cup e_{m-1}-c^0.
$$
It follows that 
$$
M_0=B(M_0)\subset B(X\cup e_1\cup\dots\cup e_{m-1}-c^0)=M-c^0.
$$
Likewise we conclude $M_1\subset M-c^1$. Since $M_0$ and $M_1$ have constant width, 
$$
M=M_0+c^0=M_1+c^1. 
$$
That is,  $M$ is extreme. 
\end{proof}
\begin{rem}
A similar argument can also be used to show that each Reuleaux polygon in $\R^2$ is extreme.
\end{rem}

\section{Rotated Reuleaux polygons}
In this section, we will use $y=(y_1,y_2,y_3)$ as coordinates for $\R^3$. Suppose that $C$ is a Reuleaux polygon in the $y_1y_3$-plane and denote $X$ as the set of vertices of $C$. We recall that  
\be\label{Kintersectionformula}
C=\bigcap_{x\in  X}D(x),
\ee
where $D(x)$ is the closed disk of radius one centered at $x$ in the $y_1y_3$-plane.  Further assume that $C$ is symmetric with respect to the $y_3$-axis and consider 
$$
K=\left\{y\in\R^3: \left(\sqrt{y_1^2+y_2^2},y_3\right)\in C\right\},
$$
which is the convex body obtained by rotating $C$ about the $y_3$-axis. As we will explain at the end of the following section, $K$ has constant width.  

\par For $x=(x_1,x_3)\in X$, define the circle 
\be\label{Circlecx}
c_x=\left\{y\in \R^3: y_3=x_3, \sqrt{y_1^2+y_2^2}=|x_1|\right\}
\ee
in $\R^3$.  The intersection of the closed balls of radius one centered at points on $c_x$ is given by
\be\label{BcxSpindleFormula}
B(c_x)=\left\{y\in \R^3: \left(\sqrt{y_1^2+y_2^2}+|x_1|\right)^2+(y_3- x_3)^2\le 1\right\}
\ee
(Corollary 2.7 of \cite{MR4775724}); $\partial B(c_x)$ is the inner portion of a spindle torus as described in the appendix. This intersection formula, the symmetry of $C$, and \eqref{Kintersectionformula} together imply 
\be\label{KeyMrepresentation}
K=\displaystyle\bigcap_{x\in X}B(c_x)=B\left(\bigcup_{x\in X}c_x\right).
\ee
This representation of $K$ will be crucial in proving that $K$ is extreme. 

\begin{proof}[Proof of Theorem \ref{thmB}]  1. As $C$ has an odd number of vertices, we may write $X=\{x_j\}^{2n}_{j=0}$. We can also choose $x_0$ as the lone vertex on the $y_3$-axis and select $x_j$ is adjacent to $x_{j+1}$ for $j=0,\dots, 2n$ where $x_{2n+1}=x_0$.  With this choice, 
\be\label{xjreflectionx2nplusoneminusj}
\text{$x_j$ is the reflection of $x_{2n+1-j}$ about the $y_3$-axis for each $j=1,\dots,n$.}
\ee
 We may also assume without any loss of generality that 
\be\label{FirstHalfVertPositiveX}
(x_j)_1\ge 0\quad\text{and}\quad (x_{2n+1-j})_1\le 0
\ee
 for $j=1,\dots, n$.  Our choices also lead to 
\be\label{ReulEdgeCond}
|x_j-x_{n+j}|=|x_j-x_{n+j+1}|=1
\ee 
for $j=1,\dots,n$. See Figure \ref{LabeledAxialFig} for an example. 
\begin{figure}[h]
\centering
 \includegraphics[width=.45\textwidth]{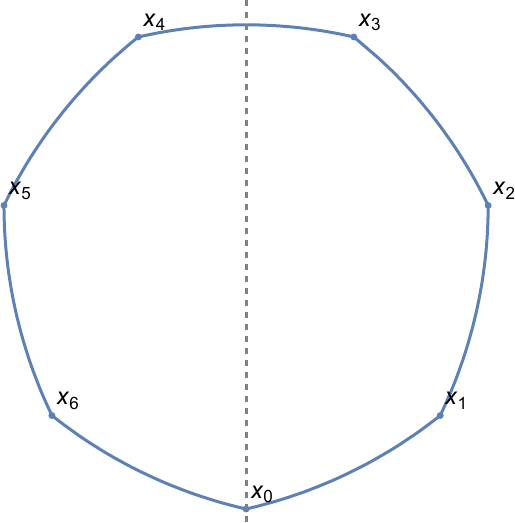}
 \hspace{.3in}
  \includegraphics[width=.45\textwidth]{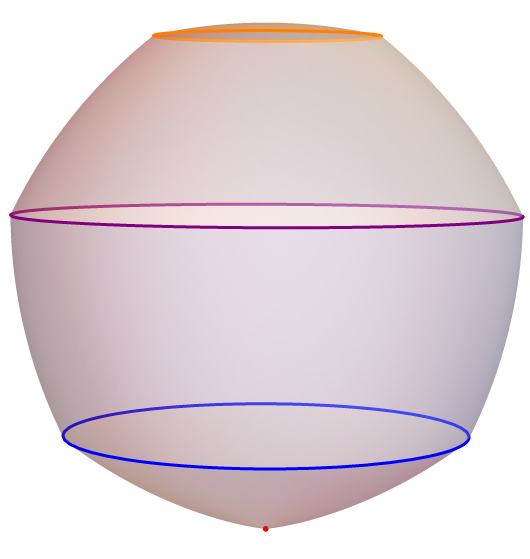}
 \caption{This figure illustrates the notation we have used in our proof of Theorem \ref{thmB}. On the left, a symmetric Reuleaux heptagon with vertices $\{x_0,\dots,x_6\}$ is shown. Note that $x_0$ is on the vertical axis and conditions \eqref{xjreflectionx2nplusoneminusj}, \eqref{FirstHalfVertPositiveX}, and \eqref{ReulEdgeCond} are satisfied. On the right is the corresponding surface of revolution which bounds a constant width figure in $\R^3$. The circles $c_{x_1}, c_{x_6}$ are labeled blue, $c_{x_2}, c_{x_5}$ are labeled purple, and $c_{x_3}, c_{x_4}$ are labeled orange; the circle $c_{x_0}$ is a point labeled red.}\label{LabeledAxialFig}
\end{figure}

\par 2. Assume $K$ is given by \eqref{KeyMrepresentation}, $\lambda\in (0,1)$, and $K_0,K_1\subset \R^3$ are constant width shapes for which 
\be\label{mlamidentity}
K=(1-\lambda)K_0+\lambda K_1.
\ee
There are $X_0=\{x^0_j\}^{2n}_{j=0}\subset K_0$, $X_1=\{x^1_j\}^{2n}_{j=1=0}\subset K_1$, so that 
$$
x_j=(1-\lambda)x_j^0+\lambda x^1_j
$$
for $j=0,\dots, 2n$. The diameters of $X_0$ and $X_1$ are both at most one; and since $X$ has diameter one, the diameters of $X_0$ and $X_1$ are equal to one. As in the previous proof,
\be\label{2ndxeyexjayidentity}
x_i-x_j=x^0_i-x^0_j=x^1_i-x^1_j
\ee
whenever $|x_i-x_j|=1$.  

\par In order to see that the above relation holds for all $i$ and $j$, we just need to show it does for all $i$ and $j=i+1$. For a given $i$, there is $k$ with $|x_i-x_{k}|=|x_i-x_{k+1}|=1.$  Note that $|x_{k+1}-x_{i+1}|=1$, as well.  Therefore, 
$$
x_i-x_{i+1}=(x_{i}-x_{k+1})+(x_{k+1}-x_{i+1})=(x_{i}^0-x_{k+1}^0)+(x_{k+1}^0-x_{i+1}^0)=x_i^0-x_{i+1}^0.
$$
Likewise, $x_i-x_{i+1}=x_i^1-x_{i+1}^1$.  We conclude that \eqref{2ndxeyexjayidentity} holds for all $i,j$. 

\par  It follows that $x_i-x^0_i$ and $x_i-x^0_i$ are independent of $i$, so there are $c^0,c^1\in \R^3$ with 
$$
c^0=x^0_i-x_i\quad \text{and}\quad c^1=x^1_i-x_i
$$
for all $i=0,\dots, 2n$. And as $x_0=(1-\lambda)x_0^0+\lambda x_0^1=x_0+(1-\lambda)c^0+\lambda c^1$, 
\be\label{czeroconeidentity}
(1-\lambda)c^0+\lambda c^1=0.
\ee

\par 3.  Next we claim that 
\be\label{circlexjayclaim}
\text{$c_{x_j}+c^0\subset K_0$ and $c_{x_j}+c^1\subset K_1$ for each $j=0,\dots, n$}.
\ee
Once we have verified this, then 
$$
\bigcup^m_{i=1}(c_{x_i}+c^0)=\bigcup^m_{i=1}c_{x_i}+c^0\subset K_0.
$$
Further, 
$$
K_0=B(K_0)\subset B\left(\bigcup^m_{i=1}c_{x_i}+c^0\right)  =B\left(\bigcup^m_{i=1}c_{x_i}\right)  +c^0=K+c^0.
$$
In the last equality, we used \eqref{KeyMrepresentation}. And as $K+c^0$ has constant width,  $K_0=K+c^0$. 
The same argument would give $K_1=K+c^1$, and in turn $K_1=K_0+c^1-c^0$.  Therefore, it suffices to verify 
the claim \eqref{circlexjayclaim}.

\par 4. We will explain how to establish \eqref{circlexjayclaim} by induction along the finite sequence
\be\label{alternatingJsequence}
j=0, n, 1, n-1, 2, n-2, ...
\ee
To this end, we will employ the parametrizations 
$$
\gamma_j(t)=(r_j\cos(t),r_j\sin(t),h_j), \quad t\in [0,2\pi]
$$
of the circles $c_{x_j}$ for $j=1,\dots, n$. Here $r_j=(x_j)_1$ is the radius and $h_j=(x_j)_3$ is the height of $c_{x_j}$ as measured by the $y_3$-axis. 
Note in particular that
\be\label{gammaj0andothergammapi}
\gamma_j(0)=x_j\quad\text{and}\quad \gamma_j(\pi)=x_{2n+1-j}
\ee
for $j=1,\dots, n$ by \eqref{xjreflectionx2nplusoneminusj}. 

\par For $j=0$,  $\gamma_j(t)=x_0$ for all $t\in [0,2\pi]$ since $(x_0)_1=0$. The claim  \eqref{FirstHalfVertPositiveX} holds in this case since 
$c_{x_0}=\{x_0\}$, $x_0+c^0=x^0_0\in K_0$, and $x_0+c^1=x^1_0\in K_1$.  Let us now suppose the claim holds for some $j=k\in \{0,\dots, n\}$. 
We will show that this implies that it also does for $j=n-k$. By \eqref{mlamidentity}, there are mappings $\gamma^0_{n-k}: [0,2\pi]\rightarrow K_0$ and $\gamma^1_{n-k}: [0,2\pi]\rightarrow K_1$ with 
\be
\gamma_{n-k}(t)=(1-\lambda)\gamma^0_{n-k}(t)+\lambda \gamma^1_{n-k}(t)
\ee
for each $t\in [0,2\pi]$. In view of \eqref{ReulEdgeCond} and \eqref{gammaj0andothergammapi}, 
\be\label{gammanminuskEq}
|\gamma_{n-k}(t)-\gamma_k(t+\pi)|=|\gamma_{n-k}(0)-\gamma_k(\pi)|=|x_{n-k}-x_{2n+1-k}|=1
\ee
 for all $t\in [0,2\pi]$. 
 
\par Our induction hypothesis is that $\gamma_k(t)+c^0\in K_0$ and $\gamma_k(t)+c^1\in K_1$ for all $t\in [0,2\pi]$. 
 Therefore,  
\be\label{gammanminuskINEq}
 |\gamma^0_{n-k}(t)-(\gamma_k(t+\pi)+c^0)|\le 1\quad \text{and}\quad |\gamma^1_{n-k}(t)-(\gamma_k(t+\pi)+c^1)|\le 1
\ee
 for all $t\in [0,2\pi]$.
 Also note that  \eqref{czeroconeidentity} allows us to write 
\begin{align}
&\gamma_{n-k}(t)-\gamma_k(t+\pi)&\\
&\hspace{.5in}=(1-\lambda)\left(\gamma^0_{n-k}(t)-(\gamma_k(t+\pi)+c^0)\right)+\lambda \left(\gamma^1_{n-k}(t)-(\gamma_k(t+\pi)+c^1)\right).
\end{align}
By \eqref{gammanminuskEq} and \eqref{gammanminuskINEq}, 
  $$
  \gamma_{n-k}(t)-\gamma_k(t+\pi)=\gamma^0_{n-k}(t)-(\gamma_k(t+\pi)+c^0)=\gamma^1_{n-k}(t)-(\gamma_k(t+\pi)+c^1).
  $$
  That is, 
  $$
c^0 + \gamma_{n-k}(t)= \gamma^0_{n-k}(t)\in K_0\quad \text{and}\quad  c^1 + \gamma_{n-k}(t)= \gamma^1_{n-k}(t)\in K_1
  $$
  for all $t\in [0,2\pi]$. In particular, this proves the claim for $j=n-k$. The proof that the claim holding for $j=n-k$ implies that it does for $j=k+1$ follows 
  similarly, so we conclude by induction. 
  \end{proof}

\section{Support function}\label{SupportSection}
In the previous sections, we used intersection properties of constant width shapes to study extreme shapes.  For the rest of this note, 
we will employ the support function. This section is a brief overview which is relevant for our purposes.  We will state the majority of results for constant width shapes in $\R^n$, but we only have $n=2$ and $n=3$ in mind. Good general references for the support function are \cite{MR1216521,MR920366}. And some of the computations below for the support function of constant width bodies were done by Howard \cite{MR2233133}. A reader familiar with the support function of a constant width body
may skip ahead to the following sections. 

\par We define the {\it support function} of a convex body $K\subset \R^n$ as 
$$
H(u)=\max_{x\in K}x\cdot u, \quad u\in \R^n.
$$
Note that $H$ is continuous, convex, and positively homogeneous. We can interpret $H$ geometrically as follows: for $x\in K$ and $u\in \Snmo$, $H(u)-x\cdot u$ is the 
distance from $x$ to the supporting plane of $M$ with outward normal $u$.  The formula   
$$
K=\bigcap_{u\in \Snmo}\{x\in \R^n: x\cdot u\le H(u)\}
$$
also allows us to recover $K$ from $H$. 

\par  If we write $H_K$ for the support function associated with a convex body $K$, 
then 
$$
H_{K_1+K_2}=H_{K_1}+H_{K_2}.
$$
It turns out that $K$ has constant width if and only if $K+(-K)=B(0)$. Therefore,  $K$ has constant width if and only if 
its support function $H$ satisfies
\be\label{Hconstantwidthcondition}
H(u)+H(-u)=|u|, \quad u\in \R^n. 
\ee
Furthermore, $H_{(1-\lambda)K_1+\lambda K_2}=(1-\lambda)H_{K_1}+\lambda H_{K_2}$ for $\lambda\in [0,1]$.  It follows that $(1-\lambda)K_1+\lambda K_2$ has constant width whenever $K_1$ and $K_2$ do, as noted in the introduction. 
\par We will specialize to constant width bodies for the remainder of this section. 
\\\\
\noindent {\bf Differentiability}. Rademacher's theorem implies that the Hessian $D^2H(u)$ exists and has nonnegative eigenvalues for a.e. $u\in \R^n$. In view of \eqref{Hconstantwidthcondition}, 
\be\label{HessianComputationZero}
D^2H(u)+D^2H(-u)=\frac{1}{|u|}\left(I-\frac{u\otimes u}{|u|^2}\right)
\ee
for a.e. $u\neq 0$. Here $I$ is the $n\times n$ identify matrix.  Note in particular that $D^2H$ is essentially bounded from above away from the origin. As a result, $H$ is continuously differentiable away from the origin and $DH$ is Lipschitz continuous on $|u|\ge r$ for any $r>0$. 
\\\\
\noindent {\bf The inverse Gauss map}.  Since $H$ is positively homogeneous
\be\label{HomogeneousOneH}
H(u)=DH(u)\cdot u
\ee
for all $u\in \R^n\setminus\{0\}$. Moreover, $DH: \Snmo \to \partial K$
is surjective. And as $K$ is strictly convex, $DH(u)$ is the unique $x\in \partial K$ for which $H(u)=x\cdot u$.  Therefore, $DH(u)$ is the point on $\partial K$ which has outward  unit normal $u\in \Snmo$; so if $\partial K$ is smooth, $DH$ is the inverse of the Gauss map. 
 \\\\
\noindent {\bf The restriction of $H$ to $\Snmo$}. We recall the tangent space of $\Snmo$ at $u$ is simply $u^\perp=\{v\in \R^n: u\cdot v=0\}$. 
Now consider $H$ as a function on $\Snmo$
$$
h=H|_{\Snmo}.
$$
As mentioned above, $h: \Snmo\rightarrow \R$ is continuously differentiable and $\nabla h:  \Snmo\rightarrow \R^n$ is Lipschitz continuous.  A direct computation gives 
$$
DH(u)=\nabla h(u)+h(u)u
$$
for each $u\in \Snmo$.   Moreover, 
\be\label{HessianComputation}
\nabla(DH)(u)=\nabla^2 h(u)+h(u)\id_{u^\perp}
\ee
exists and is a symmetric, nonnegative definite operator on $u^\perp$ for a.e. $u\in \Snmo$. 
\begin{rem}\label{RecoveryRemark}
Conversely, if $h: \Snmo\rightarrow \R$ is continuously differentiable, $h(u)+h(-u)=1$ for all $u\in\Snmo$, and 
$\nabla^2h(u)+h(u)\id_{u^\perp}$ is nonnegative definite for a.e. $u\in\Snmo$, then  
$$
H(u)=
\begin{cases}
|u|h\left(u/|u|\right), \quad & u\neq 0\\
0, \quad & u=0
\end{cases}
$$
is the support function of a constant width body. 
\end{rem}  
\noindent {\bf Principal radii of curvature bounds}. If $K$ is smooth and
$\nabla^2h(u)+h(u)\id_{u^\perp}$
is positive definite for each $u\in\Snmo$, this mapping is the differential of the inverse Gauss map $DH$ at $u$.  
Its inverse is the operator associated with the second fundamental form of $\partial K$ at $DH(u)$.   As a result, the eigenvalues of $\nabla^2h(u)+h(u)\id_{u^\perp}$ are the principal radii of curvature of  $\partial K$ at $DH(u)$. 

\par As noted above, for a general $K$, $\nabla^2h(u)+h(u)\id_{u^\perp}$ exists and is symmetric and  nonnegative definite for a.e. $u\in \Snmo$. As a result, we will also call its eigenvalues principal radii of curvature. Since $h(u)+h(-u)=1$ for all $u\in \Snmo$, 
\be\label{RplusRminusOne}
\left(\nabla^2h(u)+h(u)\id_{u^\perp}\right)+\left(\nabla^2h(-u)+h(-u)\id_{u^\perp}\right)=\id_{u^\perp}
\ee
 a.e. $u\in \Snmo$. And as $\nabla^2h(-u)+h(-u)\id_{u^\perp}$ is essentially nonnegative definite, 
$$
\text{the principal radii of curvature of $K$ belong to the interval $[0,1]$ }
$$
for a.e. $u\in \Snmo$.
\\\\ 
\noindent {\bf Polar coordinates}.
 Suppose $n=2$ and set $u(\theta)=(\cos(\theta),\sin(\theta))$ for $\theta \in [0,2\pi]$.  Note $\theta$ is the standard polar coordinate on $\mathbb{S}^1$.  
We'll write $h(\theta)$ for $H(u(\theta))$. Observe
$$
h(\theta)+h(\theta+\pi)=1
$$ 
and 
\begin{align}
DH(u(\theta))&=h(\theta)u(\theta)+h'(\theta)u'(\theta)
\end{align}
is a parametrization of $\partial K$ in polar coordinates. As,  
$$
\frac{d}{d\theta}DH(u(\theta))=(h''(\theta)+h(\theta))u'(\theta)
$$
for a.e. $\theta \in [0,2\pi]$, 
$$
0\le h''(\theta)+h(\theta)\le 1$$
for a.e. $\theta \in [0,2\pi]$. 
\\\\
\noindent {\bf Spherical coordinates}. Suppose $n=3$ and recall that $\Stwo$ may be parametrized with  
$$
u(\theta,\phi)=(\sin\phi\cos\theta ,\sin\phi\sin\theta, \cos\phi)
$$
for $\theta\in [0,2\pi]$ and $\phi\in [0,\pi]$.  These are standard spherical coordinates on $\Stwo$, where the $x_3$-axis is the polar axis, $\phi$ is the polar angle, and $\theta$ is the azimuthal angle. Moreover, 
$\{u_\theta,u_\phi\}$ is an orthogonal basis for $u^\perp$. 

\par For ease of notation, we will write $h(\theta,\phi)$ for $H(u(\theta,\phi))$. Note that 
$$
h(\pi+\theta,\pi-\phi)+h(\theta,\phi)=1
$$
and 
\be\label{SphericalCoordParam}
DH(u)=\frac{h_\theta}{(\sin\phi)^2} u_\theta+h_\phi u_\phi+h u
\ee
parametrizes $\partial K$.  Direct computation also gives 
$$
(DH(u))_\theta=\left(\frac{h_{\theta\theta}}{(\sin\phi)^2}+\frac{h_\phi}{\tan\phi} +h\right)u_\theta+\left(\sin\phi\partial_{\phi}\left(\frac{h_\theta}{\sin\phi}\right)\right)u_\phi
$$
and 
$$
(DH(u))_\phi=\left(\frac{1}{\sin\phi}\partial_{\phi}\left(\frac{h_\theta}{\sin\phi}\right)\right)u_\theta+(h_{\phi\phi}+h)u_\phi.
$$
\par As a result, 
\be\label{SphericalCoordIneq}
\left(
\begin{array}{cc}
\displaystyle\frac{h_{\theta\theta}}{(\sin\phi)^2}+\frac{h_\phi}{\tan\phi} +h &\displaystyle \sin\phi\partial_{\phi}\left(\frac{h_\theta}{\sin\phi}\right)  \\
\displaystyle\frac{1}{\sin\phi}\partial_{\phi}\left(\frac{h_\theta}{\sin\phi}\right) & h_{\phi\phi}+h
\end{array}
\right)
\ee
is the matrix representation of $\nabla(DH)(u)=\nabla^2 h(u)+h(u)\id_{u^\perp}$ in the basis $\{u_\theta,u_\phi\}$. 
It follows that the above matrix is diagonalizable and its eigenvalues belong to $[0,1]$ for a.e. $\theta\in [0,2\pi]$ and $\phi\in [0,\pi]$.
\\\\
\noindent {\bf Axial symmetry}. Suppose $C$ is a constant width shape in the $x_1x_3$-plane and that $C$ is symmetric about the $x_3$-axis. Then its support function satisfies 
\be\label{HKsymmetric2D}
H_C(u_1,u_3)=H_C(-u_1,u_3)
\ee
for $(u_1,u_3)\in \R^2$. Consider the convex body 
$$
K=\left\{x\in\R^3: \left(\sqrt{x_1^2+x_2^2},x_3\right)\in C\right\}
$$
obtained by rotating $C$ about the $x_3$-axis. It is routine to check that 
\be\label{HMsymmetric3D}
H_K(v)=H_C\left(\sqrt{v_1^2+v_2^2},v_3\right)
\ee
for $v\in \R^3$. It follows easily from \eqref{HKsymmetric2D} and \eqref{HMsymmetric3D} that $H_K$ satisfies \eqref{Hconstantwidthcondition}; consequently, $K$ has constant width.

\par In spherical coordinates, 
\be\label{AxialhmhkEq}
h_K(\theta,\phi)=H_K(u(\theta,\phi))=H_C(\sin\phi,\cos\phi)=h_C(\pi/2-\phi)
\ee
is independent of $\theta$. Therefore, $h(\phi)=h_K(\theta,\phi)$ satisfies 
$$
h(\pi-\phi)+h(\phi)=1
$$
for all $\phi\in [0,\pi]$, and the matrix representation \eqref{SphericalCoordIneq} is 
$$
\left(
\begin{array}{cc}
\displaystyle\frac{h'}{\tan\phi} +h &\displaystyle 0  \\
\displaystyle0 & h''+h
\end{array}
\right)
$$
for a.e. $\phi\in [0,\pi]$.

\section{Axially symmetric shapes}
Kallay showed that a constant width $C\subset \R^2$ is extreme if and only if its support function $h$ in polar coordinates satisfies
$$
 h''(\theta)+h(\theta)\in\{0,1\}\quad\text{for a.e. $\theta\in [0,2\pi]$}.
 $$
 (Theorem 5 of \cite{MR350618}). We will use this result in our proof of Theorem \ref{thmC}. Let us first make a basic observation. 
\begin{lem}
Suppose that $C\subset \R^2$ has constant width and is symmetric with respect to the $x_2$-axis.  If $C$ is not extreme, then  
$C=(1-\lambda)C_++\lambda C_-$ for some $\lambda\in (0,1)$, where $C_\pm\subset \R^2$ are constant width, symmetric with respect to the $x_2$-axis, and 
$C_+$ is not a translate of $C_-$. 
\end{lem}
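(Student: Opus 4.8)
The plan is to translate the statement into the language of support functions in polar coordinates and then run a symmetry‑respecting version of Kallay's perturbation argument. Let $h$ be the support function of $C$ and set $\mu:=h''+h\in L^\infty(\mathbb{S}^1)$, so $0\le\mu\le1$ a.e. Axial symmetry of $C$ gives $h(\theta)=h(\pi-\theta)$, hence $\mu(\theta)=\mu(\pi-\theta)$; constant width gives $h(\theta)+h(\theta+\pi)=1$, hence $\mu(\theta)+\mu(\theta+\pi)=1$. Since $C$ is not extreme, Kallay's criterion says $\mu\notin\{0,1\}$ on a set of positive measure, so there is $\delta>0$ with $S:=\{\theta:\delta\le\mu(\theta)\le1-\delta\}$ of positive measure. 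The two identities above show that $S$ is invariant, up to null sets, under $\theta\mapsto\pi-\theta$ and under $\theta\mapsto\theta+\pi$; in particular $S\cap[0,\pi/2)$ has positive measure.

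Next I would construct a perturbation $\rho\in C^{1,1}(\mathbb{S}^1)$ with three properties: (i) $\rho(\theta)+\rho(\theta+\pi)=0$ and $\rho(\pi-\theta)=\rho(\theta)$; (ii) $\psi:=\rho''+\rho$ is supported in $S$; and (iii) $\rho$ has vanishing first‑order Fourier coefficients, so $\rho\notin\operatorname{span}\{\cos\theta,\sin\theta\}$. To do this, choose disjoint positive‑measure sets $A,B\subseteq S\cap[0,\pi/2)$ and set $\psi_0:=\mathbf{1}_A-c\,\mathbf{1}_B$ with $c>0$ chosen so that $\int_0^{\pi/2}\psi_0\sin\theta\,d\theta=0$; reflect $\psi_0$ about $\theta=\pi/2$ and then extend by the rule $\psi(\theta+\pi)=-\psi(\theta)$ to obtain a nonzero $\psi\in L^\infty(\mathbb{S}^1)$, supported in $S$, symmetric under $\theta\mapsto\pi-\theta$ and antisymmetric under $\theta\mapsto\theta+\pi$. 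A short computation using these two symmetries gives $\int\psi\cos\theta\,d\theta=\int\psi\sin\theta\,d\theta=0$, which is exactly the Fredholm solvability condition for $\rho''+\rho=\psi$ on $\mathbb{S}^1$; let $\rho$ be the solution with zero degree‑one Fourier coefficients. Since $\psi\in L^\infty$ one gets $\rho\in C^1$ and $\rho''=\psi-\rho\in L^\infty$, so $\rho\in C^{1,1}$, and $\rho$ inherits the two symmetries of $\psi$ because the operator $\tfrac{d^2}{d\theta^2}+1$ commutes with both reflections.

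Finally, fix $\varepsilon>0$ with $\varepsilon\|\psi\|_\infty<\delta$ and let $C_\pm$ be the bodies with support functions $h\pm\varepsilon\rho$. Each $h\pm\varepsilon\rho$ is continuously differentiable, satisfies $(h\pm\varepsilon\rho)(\theta)+(h\pm\varepsilon\rho)(\theta+\pi)=1$, and $(h\pm\varepsilon\rho)''+(h\pm\varepsilon\rho)=\mu\pm\varepsilon\psi$, which lies in $[0,1]$ a.e. (it equals $\mu$ off $S$, and on $S$ it lies in $[\delta-\varepsilon\|\psi\|_\infty,\,1-\delta+\varepsilon\|\psi\|_\infty]\subseteq[0,1]$). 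By Remark~\ref{RecoveryRemark}, $C_\pm$ are genuine constant width bodies, and they are symmetric about the $x_2$-axis since $h\pm\varepsilon\rho$ is invariant under $\theta\mapsto\pi-\theta$. Moreover $C=\tfrac12C_++\tfrac12C_-$ (so $\lambda=\tfrac12$), and $C_+$ is not a translate of $C_-$ because their support functions differ by $2\varepsilon\rho$, which is not a first‑order trigonometric polynomial.

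I expect the main obstacle to be the bookkeeping in the second step: one must simultaneously juggle the reflection $\theta\mapsto\pi-\theta$ (axial symmetry of $C_\pm$) and the antipodal map $\theta\mapsto\theta+\pi$ (the constant‑width constraint), arrange that $\psi$ is supported where $\mu$ has room to move, ensure the solvability conditions hold, and still land on a $\rho$ that is not affine and is $C^{1,1}$ so that Remark~\ref{RecoveryRemark} applies. It is worth noting why a softer route fails: if one merely symmetrizes a given non‑symmetric decomposition $C=(1-\lambda)K_0+\lambda K_1$, the symmetrizations can collapse to vertical translates of $C$, yielding only the trivial decomposition, so the Kallay‑type perturbation really is needed.
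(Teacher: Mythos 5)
Your argument is correct, but it follows a genuinely different route from the paper's. The paper proves the lemma by symmetrization: given a nontrivial decomposition $C=(1-\lambda)C_0+\lambda C_1$, it reflects by $T$ and averages, taking $C_+=\tfrac12(C_0+TC_0)$ and $C_-=\tfrac12(C_1+TC_1)$, which are symmetric, have constant width, and satisfy $C=(1-\lambda)C_++\lambda C_-$; it then asserts that $C_+$ and $C_-$ cannot be translates unless $C_0$ and $C_1$ are. You instead use Kallay's criterion to locate a positive-measure set where $\delta\le h''+h\le 1-\delta$, build a perturbation $\psi$ supported there that is even under $\theta\mapsto\pi-\theta$, odd under $\theta\mapsto\theta+\pi$, and orthogonal to $\cos\theta$ and $\sin\theta$, solve $\rho''+\rho=\psi$, and split $C$ via $h\pm\varepsilon\rho$; all the measure-theoretic and regularity points in your write-up (invariance of $S$ under both symmetries, positivity of $|S\cap[0,\pi/2)|$, the choice of $c$, the Fredholm solvability, $\rho\in C^{1,1}$ so Remark~\ref{RecoveryRemark} applies, symmetry inheritance by uniqueness of the kernel-orthogonal solution, and the non-translate conclusion from $\rho\perp\{\cos\theta,\sin\theta\}$, $\rho\not\equiv0$) check out. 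Comparing the two: the paper's symmetrization is much shorter and needs no input from Kallay's theorem, but its last step is exactly the vulnerability you flag at the end — equality of the Minkowski averages $C_+=C_-+b$ does not give the pointwise identity written there, and the symmetrization of a nontrivial decomposition can in fact collapse to a trivial one (for the disk $h\equiv\tfrac12$ with $h_0=\tfrac12+\epsilon\cos3\theta$ and $h_1=\tfrac12-\epsilon\cos3\theta$, one gets $C_+=C_-=C$ even though $C_0,C_1$ are not translates, since $\cos3\theta$ is odd under $\theta\mapsto\pi-\theta$). So your heavier, perturbation-based argument buys something real: it produces a symmetric nontrivial decomposition even when the reflection-even part of $h_0-h$ is linear, which is precisely the case the averaging argument cannot handle, and it does so with the same technique the paper later uses in its Proposition on non-extreme bodies in Section 6, at the cost of invoking Kallay's theorem (which the paper cites immediately before the lemma anyway).
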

\begin{proof}
By assumption,  $C=(1-\lambda)C_0+\lambda C_1$ for some $\lambda\in (0,1)$ and constant width $C_0$, $C_1$ which are not translates of each other. Let $T$ denote reflection about the $x_2$-axis. Since $C$ is invariant under $T$, $C=(1-\lambda)TC_0+\lambda TC_1.$  Therefore, 
$$
C=(1-\lambda)C_++\lambda C_-,
$$
where 
$$
C_+=\frac{C_0+TC_0}{2}\quad \text{and}\quad C_-=\frac{C_0+TC_0}{2}.
$$
\par Note $C_\pm$ are both symmetric and have constant width. If $C_+=C_-+b$, then for all $x_0,y_0\in C_0$ and $x_1,y_1\in C_1$, 
$$
x_0+Ty_0=x_1+Ty_1+2b.
$$
That is, $x_0=x_1+(2b +Ty_1-Ty_0)$ for all $x_0\in C_0$ and $x_1\in C_1$. This would imply that $C_0$ and $C_1$ are translates. 
\end{proof}
\begin{proof}[Proof of Theorem \ref{thmC}]
We will suppose $K\subset\R^3$ has constant width, is axially symmetric about the 
$x_3$-axis, and has generating shape $C$ in the $x_1x_3$-plane which is symmetric about the $x_3$-axis.  Recall that we aim to show that $K$ is extreme if and only if $C$ is extreme. \\
\par $(\Longrightarrow)$ Suppose $C$ is not extreme. The above lemma implies that $C=(1-\lambda) C_++\lambda C_-$ for some $\lambda\in (0,1)$ and symmetric constant width $C_\pm$ in the $x_1x_3$-plane which are not translates. As, $H_C=(1-\lambda)H_{C_+}+\lambda H_{C_-}$,  
\begin{align}
H_K(v)&=H_C\left(\sqrt{v_1^2+v_2^2},v_3\right)\\
&=(1-\lambda)H_{C_+}\left(\sqrt{v_1^2+v_2^2},v_3\right)+\lambda H_{C_-}\left(\sqrt{v_1^2+v_2^2},v_3\right)\\
&=(1-\lambda)H_{K_+}(v)+ \lambda H_{K_-}(v).
\end{align}
Here $K_\pm$ is the rotation of $C_{\pm}$ about the $x_3$-axis. Thus, $K=(1-\lambda)K_++\lambda K_-$. If $K_+=K_-+b$ for some $b\in \R^3$, then $C_+=C_-+(b_1,b_3)$. Thus $K_+$ is not a translate of $K_-$, so $K$ is not extreme. 
It follows that if $K$ is extreme, then $C$ is extreme. 
\\
\par $(\Longleftarrow)$ Suppose $C$ is extreme. Further assume that 
there are constant width $K_0,K_1\subset \R^3$ and $\lambda\in (0,1)$  with 
\be\label{ExtEqnKaxial}
K=(1-\lambda)K_0+\lambda K_1.
\ee
Let $H$, $H_0$, and $H_1$ be the support functions of $K, K_0$, and $K_1$, respectively. Then 
$$
H(u)=(1-\lambda)H_0(u)+\lambda H_1(u)
$$
for all $u\in \R^3$. Differentiating both sides of this equation at $u=e_3$ gives  
$$
DH(e_3)=(1-\lambda)DH_0(e_3)+\lambda DH_1(e_3).
$$

\par In view of \eqref{ExtEqnKaxial}, 
\be\label{ExtEqnKaxial2}
K-DH(e_3)=(1-\lambda)(K_0-DH_0(e_3))+\lambda (K_1-DH_1(e_3)).
\ee
Note that the origin is the point of the boundary of $K-DH(e_3)$, $K_0-DH_0(e_3)$, and $K_0-DH_0(e_3)$ 
which has outward unit normal $e_3$. Since $K$ is axially symmetric, $DH(e_3)$ belongs to the $x_3$-axis; therefore, $ K-DH(e_3)$ is still axially symmetric.  As $K_0-DH_0(e_3)$ and $K_1-DH_1(e_3)$ are translates if and only if $K_0$ and $K_1$ are translates, we will assume going forward that the origin is the point of the boundaries of $K, K_0$, and $K_1$ which has outward unit normal $e_3$.  With this assumption, 
\be\label{AxialNormalization}
DH(e_3)=DH_0(e_3)=DH_1(e_3)=0.
\ee

\par We will represent the support functions of $K, K_0$, and $K_1$ in spherical coordinates $(\theta,\phi)$ as in the previous section by setting $h(\theta, \phi)=H(u(\theta,\phi))$ and $h^i(\theta, \phi)=H_i(u(\theta,\phi))$ for $i=0,1$. We first note that since $K$ is axially symmetric, $h(\theta, \phi)=h(\phi)$ is independent of $\theta$. Next we have 
$$
h(\phi)=(1-\lambda)h^0(\theta,\phi)+\lambda h^1(\theta,\phi).
$$
Moreover, \eqref{AxialNormalization} implies
\be\label{AxialNormalization2}
h'(0)=DH(e_3)\cdot u_\phi(\theta,0)=0\quad\text{and}\quad 
h^i_\phi(\theta,0)=DH_i(e_3)\cdot u_\phi(\theta,0)=0
\ee
for all $\theta\in [0,2\pi]$ and $i=0,1$.

\par Recall that since $C$ is extreme, 
\be\label{KallayCond}
h''(\phi)+h(\phi)\in \{0,1\}
\ee
for almost all $\phi\in [0,\pi]$. This follows from \eqref{AxialhmhkEq} and Kallay's theorem. Set 
$$
R=\left(
\begin{array}{cc}
\displaystyle\frac{h'}{\tan\phi} +h &\displaystyle 0  \\
\displaystyle0 & h''+h
\end{array}
\right)
\;\;\text{and}\;\;
R^i=\left(
\begin{array}{cc}
\displaystyle\frac{h^i_{\theta\theta}}{(\sin\phi)^2}+\frac{h^i_\phi}{\tan\phi} +h^i &\displaystyle \sin\phi\partial_{\phi}\left(\frac{h^i_\theta}{\sin\phi}\right)  \\
\displaystyle\frac{1}{\sin\phi}\partial_{\phi}\left(\frac{h^i_\theta}{\sin\phi}\right) & h^i_{\phi\phi}+h
\end{array}
\right)
$$
for a.e. $\theta\in [0,2\pi]$ and $\phi\in[0,\pi]$.  Note 
$$
R=(1-\lambda)R^0+\lambda R^1,
$$  
and recall that the eigenvalues of these matrices 
belong to $[0,1]$. 

\par Observe 
\be\label{PrinCurvExtreme}
h''(\phi)+h(\phi)=(1-\lambda)(h^0_{\phi\phi}(\theta,\phi)+h^0(\theta,\phi))
+\lambda(h^1_{\phi\phi}(\theta,\phi)+h^1(\theta,\phi))
\ee
and 
\be\label{FirstRevalueBound}
0\le R^ie_2\cdot e_2= h^i_{\phi\phi}(\theta,\phi)+h^i(\theta,\phi)\le 1
\ee
for a.e. $\theta\in [0,2\pi]$ and $\phi\in[0,\pi]$.  In view of \eqref{KallayCond}, 
$$
h''(\phi)+h(\phi)=h^i_{\phi\phi}(\theta,\phi)+h^i(\theta,\phi)
$$
for a.e. $\theta\in [0,2\pi]$ and $\phi\in[0,\pi]$. It follows that there are continuously differentiable 
$a_i, b_i: [0,2\pi]\rightarrow \R$ with 
\be\label{FirstHeyeReduction}
h^i(\theta,\phi)=h(\phi)+a_i(\theta)\cos(\phi)+b_i(\theta)\sin(\phi)
\ee
for all $\theta\in [0,2\pi]$ and $\phi\in[0,\pi]$.

\par We also have 
$$
h''+h=\left(\frac{R+R^t}{2}\right)e_2\cdot e_2=(1-\lambda)\left(\frac{R^0+(R^0)^t}{2}\right)e_2\cdot e_2+\lambda \left(\frac{R^1+(R^1)^t}{2}\right)e_2\cdot e_2.
$$
Since the eigenvalues of the symmetric matrix $(R^i+(R^i)^t)/2$ belong to $[0,1]$ and $h''+h\in \{0,1\}$ a.e.,  $e_2$ is an eigenvector of $(R^i+(R^i)^t)/2$  with eigenvalue $h''+h$  for a.e. $\theta\in [0,2\pi]$ and $\phi\in[0,\pi]$. That is, 
$$
\left(\frac{R^i+(R^i)^t}{2}\right)e_2
=\left(\begin{array}{c}
\frac{1}{2}\left(\frac{1}{\displaystyle\sin\phi}+\sin\phi\right)\partial_{\phi}\left(\displaystyle\frac{h^i_\theta}{\sin\phi}\right)\\
h^i_{\phi\phi}+h^i
\end{array}\right)=(h''+h)e_2.
$$
As a result, 
$$
\partial_{\phi}\left(\frac{h^i_\theta}{\sin\phi}\right)=0
$$
for a.e. $\theta\in [0,2\pi]$ and $\phi\in[0,\pi]$.

\par In view of \eqref{FirstHeyeReduction}, 
\be
0=\partial_{\phi}\left(\frac{h^i_\theta}{\sin\phi}\right)=a_i'(\theta)\left(\frac{ 1}{\tan\phi}\right)'
\ee
for a.e. $\theta\in [0,2\pi]$  and $\phi\in[0,\pi]$. Thus, $a_i(\theta)$ is equal to some constant $c_i\in \R$. And by \eqref{AxialNormalization2}, 
$$
0=h^i_\phi(\theta,0)=h'(0)+b_i(\theta)=b_i(\theta)
$$
for all $\theta$. Therefore, 
\be\label{SecondHeyeReduction}
h^i(\theta,\phi)=h(\phi)+c_i\cos(\phi)
\ee
for $i=0,1.$ Consequently, $H_{i}(u)=H(u)+c_iu_3$ for $u\in \R^3$ and in turn 
$$
K_i=K+c_ie_3
$$
for $i=0,1.$ It follows that $K$ is extreme. 
\end{proof}

\section{Conjecture}
We will 
write $H$ for the support function of a constant width $K\subset \R^3$ and $h=H|_{\Stwo}$. Recall that the operator 
$$
R(u):=\nabla(DH)(u)=\nabla^2h(u)+h(u)\id_{u^\perp}
$$
on $u^\perp$ is defined for a.e. $u\in \Stwo$. Moreover,  its smaller  $r_{\text{min}}(u)$ and larger $ r_{\text{max}}(u)$  eigenvalues
satisfy 
$$
0\le r_{\text{min}}(u)\le r_{\text{max}}(u)\le 1
$$
for a.e. $u\in \Stwo$. We then may reformulate the conjecture given in the introduction more precisely as follows. 
\\\\
{\bf Conjecture}.  {\it $K$ is extreme if and only if  
\be\label{extremeHcond}
r_{\text{min}}(u)=0\;\text{ or }\; 
r_{\text{max}}(u)=1
\ee
for a.e. $u\in \Stwo$. }
\\
\par We will check that Meisnner polyhedra and rotated Reuleaux polygons satisfy the condition above. To this end, we will use \eqref{RplusRminusOne}
\be\label{ResnickPreCond}
R(u)+R(-u)=\id_{u^\perp} \text{  for a.e. $u\in \Stwo$},
\ee
which also implies  
\be\label{rminrmaxCond}
r_{\text{min}}(u)+r_{\text{max}}(-u)=1
\ee
for a.e. $u\in \Stwo$. 
\begin{figure}[h]
     \centering
         \includegraphics[width=.8\textwidth]{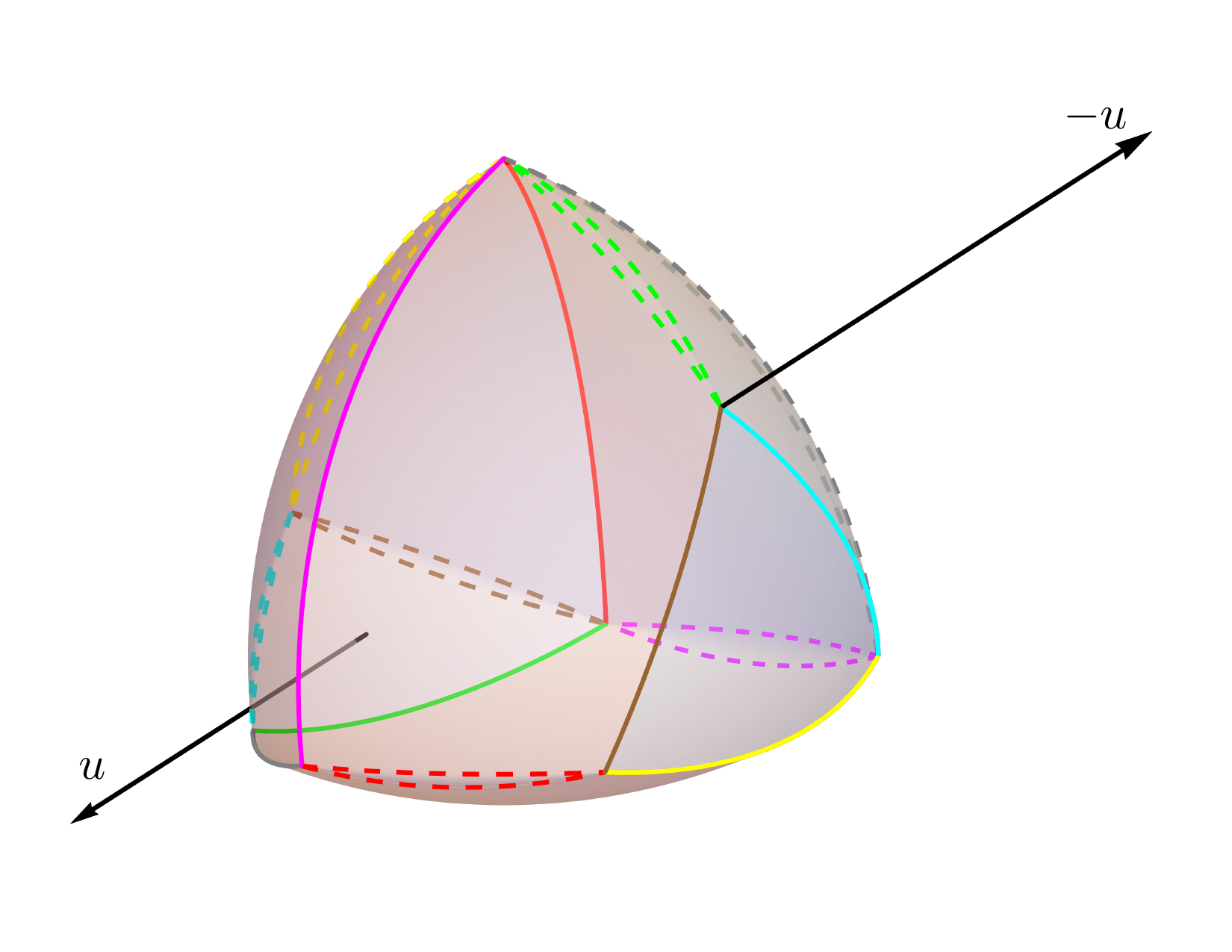}
         \caption{This is a Meissner polyhedron $K$ with two outward unit normals $u$ and $-u$. Note that $r_{\text{min}}(u)=r_{\text{max}}(u)=1$, as $u$ is normal to $\partial K$ on a portion of a unit sphere. And observe $r_{\text{min}}(-u)=r_{\text{max}}(-u)=0$, since $-u$ is normal to $\partial K$ at a vertex.}
         \label{MeissConjFig}
\end{figure}
\begin{ex}
Suppose $K$ is a Meissner polyhedron with support function $H$.  Observe that $\partial K$ is piecewise smooth and is a union finitely many portions of spheres of radius one and sections of spindle tori obtained by rotating a circular arc of radius one about a line in $\R^3$.  

\par Let us suppose that $S\subset \Stwo$  is open and each $u\in S$ is  normal to $\partial K$ on one of its spherical parts.  Then 
$H(u)=|u|+x\cdot u$ for $u\in S$,  where $x$ is the center of the sphere in question.  In this case,  
$
R(u)=\id_{u^\perp}
$
for each $u\in S$.  Also note each $u\in -S$, $H(u)=x\cdot u$ so $R(u)$ is the 0 operator on $u^\perp$. See Figure \ref{MeissConjFig}.

\par Now assume that $A\subset \Stwo$  is open and for each $u\in A$ is normal to $\partial K$ on one of its spindle portions. By Corollary \ref{RSpindleTorus}
in the appendix, $r_{\text{max}}(u)=1$ and $0<r_{\text{min}}(u)<1$ for each $u\in A$. Further, we note that $-A$ corresponds to the normals on a circular edge of $\partial K$ opposite the spindle. In this case $r_{\text{min}}(u)=0$ and $0<r_{\text{max}}(u)<1$ by \eqref{rminrmaxCond}. See Figure \ref{MeissConjFig2}. We conclude that condition stated in the conjecture holds for Meissner polyhedra.  
\end{ex}
\begin{figure}[h]
     \centering
         \includegraphics[width=.8\textwidth]{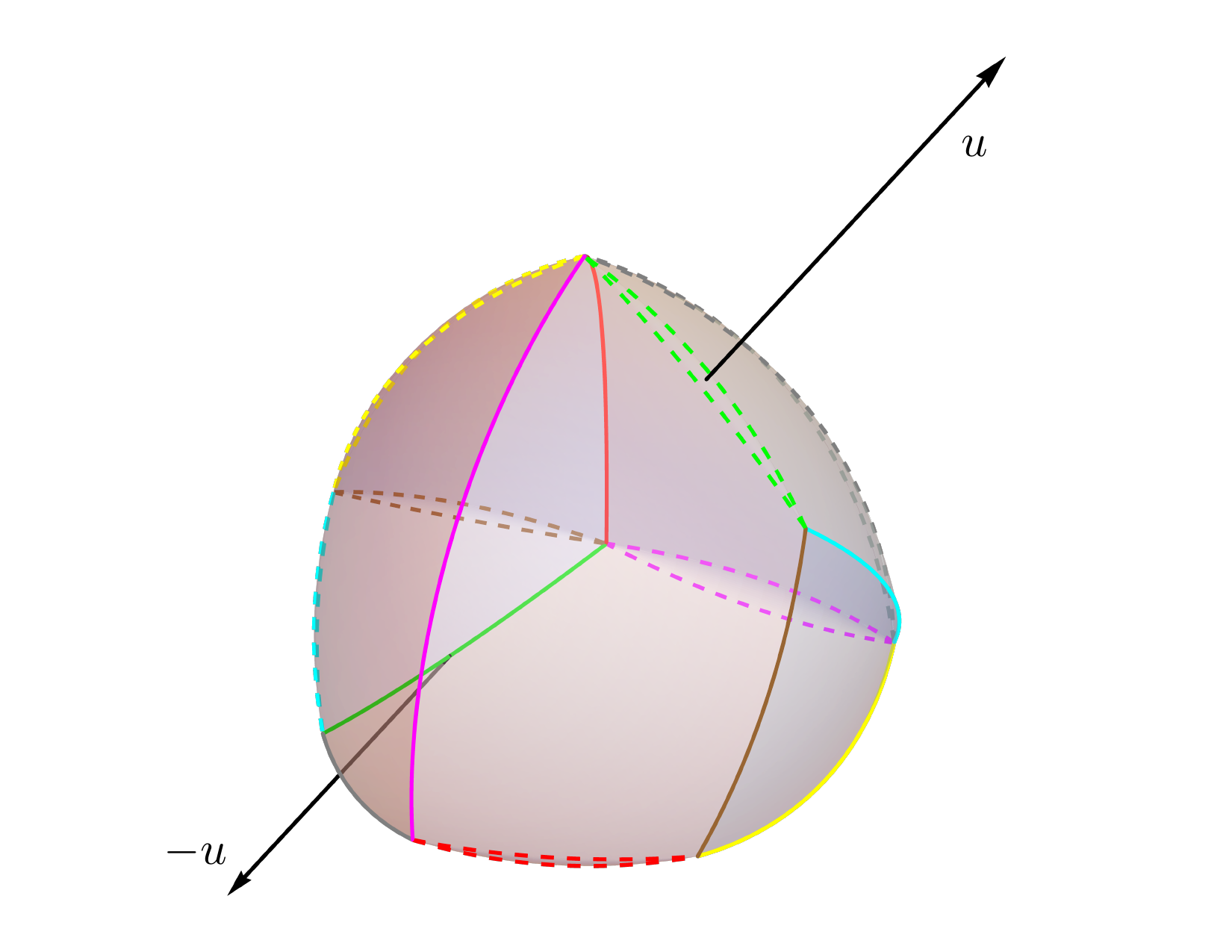}
         \caption{This is the same Meissner polyhedron $K$ in Figure \ref{MeissConjFig} again shown with two outward unit normals $u$ and $-u$. Here $0<r_{\text{min}}(u)<1$ and $r_{\text{max}}(u)=1$, as $u$ is normal to $\partial K$ on a segment of a spindle torus. Also notice that $r_{\text{min}}(-u)=0$ and $0<r_{\text{max}}(-u)<1$, since $-u$ is normal to $\partial K$ along a circular arc with radius less than one. }
         \label{MeissConjFig2}
\end{figure}
\begin{figure}[h]
     \centering
         \includegraphics[width=.8\textwidth]{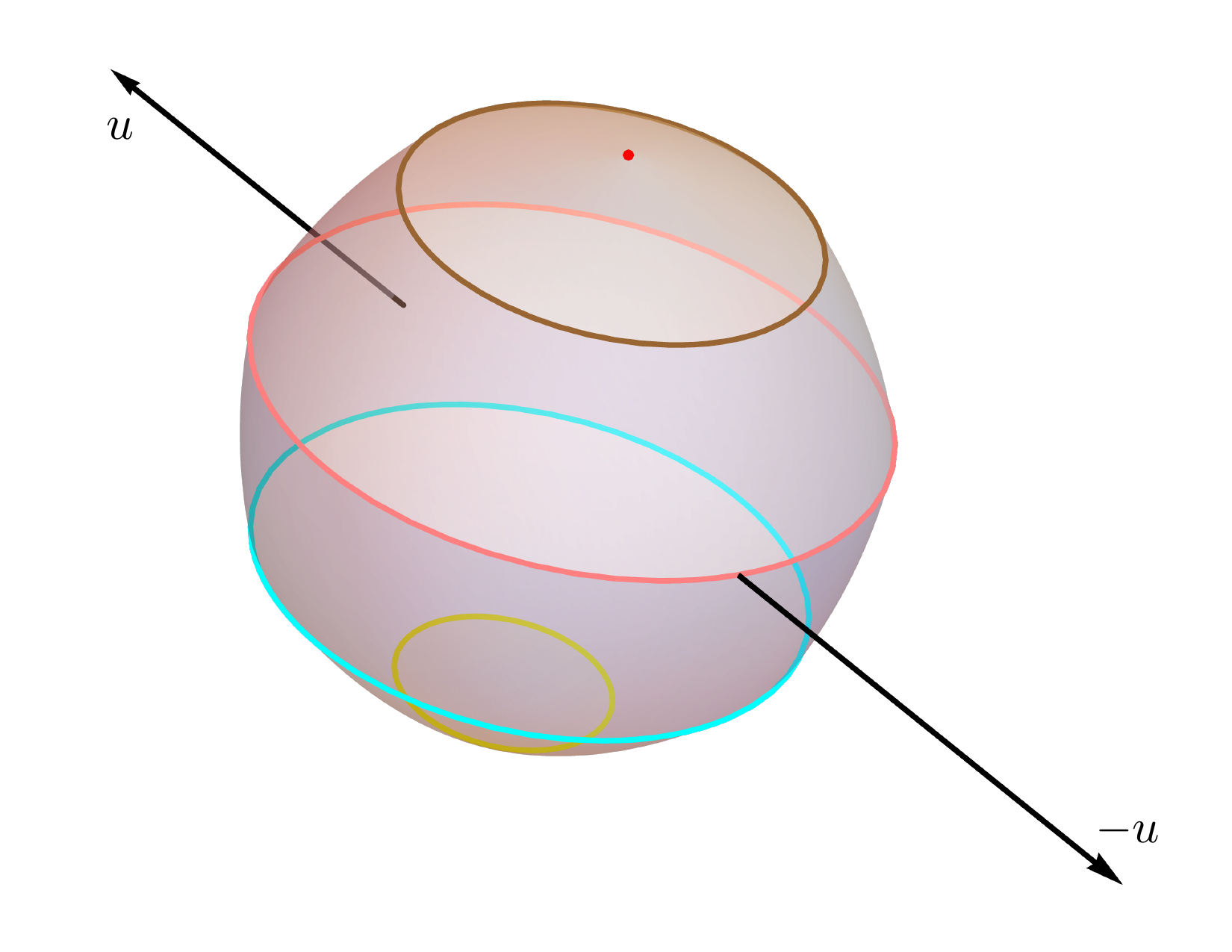}
         \caption{This is a rotated Reuleaux polygon $K$ with two outward unit normals $u$ and $-u$. Note that $0<r_{\text{min}}(u)<1$ and $r_{\text{max}}(u)=1$, since $u$ is normal to $\partial K$ on a portion of a spindle torus.  And $r_{\text{min}}(-u)=0$ and $0<r_{\text{max}}(-u)<1$, as $-u$ is normal to $\partial K$ along a circular arc with radius less than one. }
         \label{RotReulConj}         
\end{figure}
\begin{ex}
Suppose $K\subset \R^3$ is a constant width shape which is axially symmetric about the $x_3$-axis.  In spherical coordinates $(\theta,\psi)$, the matrix representation of $R$ is 
$$
\left(
\begin{array}{cc}
\displaystyle\frac{h'}{\tan\phi} +h &\displaystyle 0  \\
\displaystyle0 & h''+h
\end{array}
\right)
$$
for all $\theta\in [0,2\pi]$ and almost every $\phi\in [0,\pi].$
Here $h(\phi)=H(u(\theta,\phi))$ is independent of $\theta$ since $K$ is axially symmetric. Theorem \ref{thmC} asserts that $K$ is extreme if and only if the shape that generates it is extreme. This happens in turn if and only if $h(\phi)''+h(\phi)\in \{0,1\}$ for almost every $\phi\in [0,\pi]$ by Kallay's theorem. In this case, one of the eigenvalues of the above matrix is equal to $0$ or $1$ for almost every $\phi\in [0,\pi]$ and all $\theta\in [0,2\pi]$. As a result, the stated condition of the conjecture holds. See Figure \ref{RotReulConj}. \end{ex}

\par The conjecture asserts that if there is a set of positive measure for which $0<r_{\text{min}}$ and $r_{\text{max}}<1$, then
$K$ is not extreme. We now give a more stringent sufficient condition for $K$ not to be extreme along these lines. 
\begin{prop}
Suppose $A\subset\Stwo$ is open and nonempty, $\delta\in (0,1/2)$, and 
\be\label{extremeHcond}
\delta\le r_{\text{min}}(u)\quad\text{and}\quad r_{\text{max}}(u)\le 1-\delta
\ee
for a.e. $u\in A$.  Then $K$ is not extreme. 
\end{prop}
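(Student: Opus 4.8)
The plan is to show $K$ is not extreme by producing a nontrivial Minkowski decomposition $K=\tfrac12 K_++\tfrac12 K_-$ with $K_\pm$ of constant width and $K_+$ not a translate of $K_-$, obtained by an additive perturbation of the support function. By the support function calculus of Section~\ref{SupportSection}, and in particular by Remark~\ref{RecoveryRemark}, it suffices to construct a function $g\colon\Stwo\to\R$ which is $C^\infty$, \emph{odd} (so $g(u)+g(-u)=0$), supported in a small region where $R(u):=\nabla^2h(u)+h(u)\id_{u^\perp}$ has eigenvalues bounded away from $0$, and which is \emph{not} the restriction of a linear function $u\mapsto b\cdot u$. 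Setting $h^\pm:=h\pm\varepsilon g$ with $h=H|_{\Stwo}$ and $\varepsilon>0$ small, oddness preserves the normalization $h^\pm(u)+h^\pm(-u)=1$, and smallness of $\varepsilon$ keeps $\nabla^2h^\pm(u)+h^\pm(u)\id_{u^\perp}$ nonnegative definite for a.e.\ $u$; then Remark~\ref{RecoveryRemark} gives constant width bodies $K_\pm$ whose support functions restrict to $h^\pm$, and $H=\tfrac12 H_{K_+}+\tfrac12 H_{K_-}$ yields
\[
K=\tfrac12 K_++\tfrac12 K_-.
\]
Finally $K_+=K_-+b$ would force $2\varepsilon g(u)=b\cdot u$ on $\Stwo$, which is impossible since $g$ is not linear; hence $K_+$ and $K_-$ are not translates and $K$ is not extreme.

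To build $g$: first, by \eqref{ResnickPreCond} (equivalently \eqref{rminrmaxCond}) the hypothesis self-improves, so $\delta\le r_{\min}(u)$ and $r_{\max}(u)\le 1-\delta$, i.e.\ $\delta\,\id_{u^\perp}\le R(u)\le(1-\delta)\id_{u^\perp}$, for a.e.\ $u\in A\cup(-A)$. Pick $u_0\in A$ and $\rho\in(0,\pi/2)$ small enough that the closed geodesic ball $\overline B:=\overline B(u_0,\rho)$ lies in $A$; then $\rho<\pi/2$ forces $B\cap(-B)=\emptyset$. Choose $\varphi\in C^\infty(\Stwo)$ with $\varphi\not\equiv0$ and $\operatorname{supp}\varphi\subset B$, and set $g:=\varphi-\varphi\circ(-\id)$. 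Then $g$ is $C^\infty$, odd, equals $\varphi$ on $B$, vanishes outside $B\cup(-B)$, and is not identically zero; since a nonzero linear function on $\Stwo$ vanishes only along a great circle, $g$ cannot be the restriction of any $u\mapsto b\cdot u$.

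For the verification, write $R_g(u):=\nabla^2 g(u)+g(u)\id_{u^\perp}$ and $M:=\sup_{u\in\Stwo}\|R_g(u)\|$, which is finite by smoothness (and positive, as $M=0$ would make $g$ linear), and take $0<\varepsilon<\delta/M$. Since $g$ is odd, the Hessian identity $\nabla^2 g(-u)=-\nabla^2 g(u)$ holds under the identification $(-u)^\perp=u^\perp$, hence $R_g(-u)=-R_g(u)$; this is the one place that needs a careful unwinding of the antipodal map on $\Stwo$. Now $h^\pm$ is $C^1$, satisfies $h^\pm(u)+h^\pm(-u)=1$, and for a.e.\ $u$ with $g\equiv0$ near $u$ we have $\nabla^2h^\pm(u)+h^\pm(u)\id_{u^\perp}=R(u)\ge0$ because $K$ has constant width, while for a.e.\ $u\in B\cup(-B)\subset A\cup(-A)$ we have $\nabla^2h^\pm(u)+h^\pm(u)\id_{u^\perp}=R(u)\pm\varepsilon R_g(u)\ge(\delta-\varepsilon M)\id_{u^\perp}>0$. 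Thus $\nabla^2 h^\pm+h^\pm\id_{u^\perp}$ is nonnegative definite a.e., and the first paragraph concludes the proof. The main obstacle is precisely the a.e.\ bookkeeping: combining the relations $R(u)+R(-u)=\id_{u^\perp}$ and $R_g(-u)=-R_g(u)$ to deduce the two-sided bound on all of $B\cup(-B)$, and matching the various full-measure sets; the perturbation estimate itself is routine.
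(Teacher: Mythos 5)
Your proof is correct and follows essentially the same route as the paper: an odd smooth perturbation $g$ supported in $A\cup(-A)$ (the paper writes it as $g_0$ on $A$ extended antisymmetrically, after shrinking $A$ so that $A\cap(-A)=\emptyset$), a smallness parameter chosen so the perturbed operator $\nabla^2h^\pm+h^\pm\id_{u^\perp}$ stays nonnegative a.e., Remark~\ref{RecoveryRemark} to produce constant width bodies $K_\pm$ with $K=\tfrac12K_++\tfrac12K_-$, and nonlinearity of $g$ to rule out translates. The only cosmetic differences are that you handle $-A$ via the self-improved bound from \eqref{rminrmaxCond} while the paper uses $L(u)+L(-u)=\id_{u^\perp}$ directly, which is the same computation.
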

\begin{proof}
It suffices to verify this assertion for $A$ and $-A$ disjoint, as we can reduce to this case by selecting an appropriate subset of $A$.  Choose $g_0\in C^\infty_c(A)$ which is not identically equal to $0$ and $t\in \R$ so that the eigenvalues of $t\left( \nabla^2g_0(u)+g_0(u)\id_{u^\perp}\right)$ belong to the interval $[-\delta,\delta]$ for all $u\in A$.  This can be accomplished since  $g$ and its derivatives are bounded. 

\par Next define 
$$
g(u)=
\begin{cases}
\;\; \;g_0(u), \quad &u\in A\\
-g_0(-u), \quad &u\in -A\\
\;\;\;\; 0,\quad &\text{otherwise}
\end{cases}
$$
for $u\in \Stwo$.  Note that $g$ is a smooth function. With our choice of $t$, the smaller eigenvalue of the operator
\be\label{htgOperator}
L(u)=\nabla^2h(u)+h(u)\id_{u^\perp}+t\left( \nabla^2g(u)+g(u)\id_{u^\perp}\right)
\ee
is at least $r_{\text{min}}(u)-\delta\ge 0$ for a.e. $u\in A$.   Likewise, its larger eigenvalue is at most  $r_{\text{max}}(u)+\delta\le 1$ a.e. $u\in A$.  By \eqref{ResnickPreCond} and recalling that $g$ is antisymmetric, $L(u)+L(-u)=\id_{u^\perp}$ for almost every $u\in - A$. It follows that the eigenvalues of $L(u)$ also belong to $[0,1]$ for a.e. $u\in -A$.   These 
eigenvalue bounds also clearly hold for $u\not\in A\cup (-A)$.

\par By Remark \ref{RecoveryRemark}, the homogenous extensions of $h\pm tg$ are the support functions
of constant width bodies $K_\pm\subset \R^3$.  Since 
$h=\frac{1}{2}(h+tg)+\frac{1}{2}(h-tg)$,
$$
K=\frac{1}{2}K_++\frac{1}{2}K_-.
$$
And as the homogenous extension of $g$ is not equal to a linear function, $K_+$ and $K_-$ are not translates of one another. Therefore, $K$ is not extreme.
\end{proof}
 For a given open $A\subset \Stwo$, we'll say $DH(A)\subset \partial K$ is {\it smooth} if $DH$ is smooth and $r_\text{min}$ is positive on $A$.  In this scenario, the inverse function theorem implies that $DH$ is invertible in a neighborhood of each point of $A$, so the Gauss map is well-defined on $DH(A)$.   
 
 \par The subsequent assertion is implicitly contained in Theorem 5 of \cite{MR2342202}, where the Bayen, Lachand-Robert, and Oudet proved that for any volume-minimizing constant width body $K$ and open $A\subset K$, either $DH(A)$ or $DH(-A)$ is not smooth.  We note that the equality condition of the Brunn-Minkowski inequality can be used to show that any volume-minimizing constant width shape is necessarily extreme. 
\begin{cor}
If $K$ is extreme, for each nonempty open $A\subset \Stwo$ either $DH(A)$ or $DH(-A)$ is not a smooth subset of $\partial K$. 
\end{cor}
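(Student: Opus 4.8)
The plan is to derive the Corollary as essentially the contrapositive of the Proposition, after localizing. Suppose $K$ is extreme and, for contradiction, that there is a nonempty open $A\subset\Stwo$ for which both $DH(A)$ and $DH(-A)$ are smooth subsets of $\partial K$. By definition this means that $DH$ is smooth and $r_{\text{min}}$ is positive on $A$, and likewise on the open set $-A$. Consequently the operator $R(u)=\nabla(DH)(u)=\nabla^2 h(u)+h(u)\id_{u^\perp}$ is genuinely defined and smooth --- not merely defined for a.e.\ $u$ --- on $A$ and on $-A$, so its eigenvalues $r_{\text{min}}$ and $r_{\text{max}}$ are continuous on each of these sets.

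First I would fix a point $u_0\in A$ and choose a small open neighborhood $A_\ast\ni u_0$ whose closure $\overline{A_\ast}$ is a compact subset of $A$. Then $\overline{A_\ast}$ and $-\overline{A_\ast}$ are compact subsets of $A$ and $-A$, respectively, and since $r_{\text{min}}$ is continuous and strictly positive there, it attains positive minima $c'=\min_{\overline{A_\ast}}r_{\text{min}}$ and $c=\min_{-\overline{A_\ast}}r_{\text{min}}$. Next I would upgrade \eqref{rminrmaxCond} to a pointwise identity on $A_\ast$: both $u\mapsto r_{\text{max}}(u)$ and $u\mapsto r_{\text{min}}(-u)$ are continuous on $A_\ast$ and they agree for a.e.\ $u\in A_\ast$, hence for every such $u$. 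Therefore $r_{\text{max}}(u)=1-r_{\text{min}}(-u)\le 1-c$ for all $u\in A_\ast$. Setting $\delta:=\min\{c,\,c',\,1/4\}\in(0,1/2)$, we obtain $\delta\le r_{\text{min}}(u)$ and $r_{\text{max}}(u)\le 1-\delta$ for all $u\in A_\ast$.

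Finally, the Proposition applied to the nonempty open set $A_\ast$ and this $\delta$ forces $K$ to be non-extreme, contradicting our hypothesis; hence no such $A$ exists, which is exactly the claim. The one place needing care is the step from the a.e.\ statements --- that $R=\nabla(DH)$ and that \eqref{rminrmaxCond} holds --- to genuine pointwise statements on $A_\ast$, and this is precisely where the smoothness of $DH$ on both $A$ and $-A$ enters, via the fact that two continuous functions agreeing almost everywhere on an open set agree everywhere on it. The remaining ingredients are a routine compactness argument and the already established Proposition, so I do not expect a substantive obstacle.
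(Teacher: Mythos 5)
Your proposal is correct and follows essentially the same route as the paper: argue by contradiction, pass to a smaller open set whose closure lies in $A$, use continuity of $r_{\text{min}}$ (from smoothness of $DH$ on $A$ and $-A$) plus compactness to get a uniform $\delta>0$, convert it via \eqref{rminrmaxCond} into $r_{\text{max}}\le 1-\delta$, and invoke the Proposition to contradict extremality. Your extra care about upgrading the a.e.\ identity to a pointwise one is fine but not strictly needed, since the Proposition only requires the bounds almost everywhere.
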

\begin{proof}
We will argue by contradiction and suppose that $DH(A)$ and $DH(-A)$ are smooth.  Then  $r_\text{min}(u)>0$ for each $u\in A,-A$. Let $B $ be a nonempty open subset of $\Stwo$ whose closure $\overline{B}\subset A$. As $DH$ is continuous differentiable, there is some $\delta>0$ small enough for which 
$r_\text{min}(u)\ge\delta$ for $u\in B,-B$.  In view of \eqref{rminrmaxCond}, 
$$
r_\text{max}(u)=1-r_\text{min}(-u)\le 1-\delta
$$
for $u\in B$. By the previous lemma, $K$ is not extreme, which contradicts our hypothesis. 
\end{proof}
We conclude this note with the following result, which was discovered by Anciaux and Guilfoyle as a necessary condition for a volume-minimizing constant width shape \cite{MR2763770}.   
\begin{cor}
Suppose $K$ is extreme and $DH(A)$ is smooth for some nonempty open $A\subset \Stwo$. Then 
$r_\text{max}(u)=1$ for each $u\in A$.
\end{cor}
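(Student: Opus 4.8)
The plan is to argue by contradiction and reduce the statement to the Proposition proved just above. Suppose $K$ is extreme and $DH(A)$ is smooth, but that $r_{\text{max}}(u_0)<1$ for some $u_0\in A$. The first step is to record what smoothness of $DH(A)$ provides: by definition $DH$ is smooth on $A$ and $r_{\text{min}}>0$ throughout $A$, so the operator $R(u)=\nabla(DH)(u)$ is defined and continuous at \emph{every} point of $A$, not merely almost every one. Consequently its two eigenvalues $r_{\text{min}}(u)$ and $r_{\text{max}}(u)$ depend continuously on $u\in A$; one sees this by passing to a fixed coordinate chart near $u_0$ — for instance the spherical-coordinate matrix representation of $R$ from Section~\ref{SupportSection}, whose entries are smooth there — and using that the eigenvalues of a continuous family of symmetric $2\times 2$ matrices vary continuously.

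With continuity in hand, the second step is to localize. Since $r_{\text{min}}(u_0)>0$ and $r_{\text{max}}(u_0)<1$, I would pick a $\delta\in(0,1/2)$ and a nonempty open ball $B$ with $u_0\in B$ and $\overline{B}\subset A$ on which $r_{\text{min}}(u)\ge\delta$ and $r_{\text{max}}(u)\le 1-\delta$ for every $u\in B$; shrinking $B$ so that $\overline{B}$ is a compact subset of $A$ is exactly what lets a single $\delta$ serve for all of $B$. Then the Proposition applies verbatim with this $B$ and $\delta$, and yields that $K$ is not extreme, contradicting the hypothesis. Hence no such $u_0$ exists, i.e. $r_{\text{max}}(u)\ge 1$ for all $u\in A$; combined with the bound $r_{\text{max}}(u)\le 1$ that always holds by \eqref{RplusRminusOne}, this gives $r_{\text{max}}(u)=1$ for each $u\in A$.

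I do not expect a genuine obstacle here: the substantive work lives in the Proposition, and this corollary is essentially the remark that on the smooth set $A$ the a.e.\ curvature bounds become honest pointwise, continuous ones, so a single value $r_{\text{max}}(u_0)<1$ would propagate to a full open neighborhood and trigger the Proposition. The only points meriting a sentence of care are the continuity of the eigenvalues of $R(u)$ as $u$ ranges over $A$ (because $R(u)$ acts on the varying subspace $u^{\perp}$) and the selection of a uniform $\delta$, both handled by working in a fixed chart on a neighborhood whose closure is compact in $A$.
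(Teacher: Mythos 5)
Your proof is correct, but it follows a genuinely different route from the paper's. The paper deduces the statement from the \emph{previous} corollary: since $DH(A)$ is smooth, that corollary forces $DH(-A)$ to be non-smooth; yet $DH(u)+DH(-u)=u$ shows $DH$ is itself smooth on $-A$, so the failure of smoothness there can only mean $r_{\text{min}}\equiv 0$ on $-A$ (a pointwise claim proved by localizing and applying the previous corollary again), and then the antipodal relation $r_{\text{min}}(u)+r_{\text{max}}(-u)=1$ transfers this to $r_{\text{max}}\equiv 1$ on $A$. You instead bypass the previous corollary and the reflected set $-A$ entirely: assuming $r_{\text{max}}(u_0)<1$ at some $u_0\in A$, you use the smoothness hypothesis (which gives $r_{\text{min}}>0$ and continuity of the eigenvalues of $R$ on $A$) to produce an open ball $B\ni u_0$ and a uniform $\delta\in(0,1/2)$ with $\delta\le r_{\text{min}}$ and $r_{\text{max}}\le 1-\delta$ on $B$, and then invoke the Proposition directly to contradict extremeness. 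Both arguments are sound; yours is shorter and shows the corollary needs only the Proposition plus the definition of smoothness, while the paper's detour through $-A$ yields the additional information that $r_{\text{min}}\equiv 0$ on $-A$. One small point of precision: the upper bound $r_{\text{max}}\le 1$ coming from \eqref{RplusRminusOne} is a priori only an almost-everywhere statement, so to conclude $r_{\text{max}}(u)=1$ at \emph{every} $u\in A$ you should note (as your continuity argument already permits) that $r_{\text{max}}$ is continuous on $A$ and the bound holds on a dense full-measure subset, hence everywhere on $A$.
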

\begin{proof}
By the previous corollary, $DH(-A)$ is not smooth. Nevertheless 
$$
DH(u)+DH(-u)=u
$$
for all $u\in \Stwo$ by \eqref{Hconstantwidthcondition}. Therefore, $DH$ is smooth on $-A$, since it is smooth on $A$.  We claim that 
\be\label{rminClaim}
\text{$r_\text{min}(u)=0$ for $u\in -A$.}
\ee
If $r_\text{min}(u_0)>0$ for some $u_0\in -A$, then there is a neighborhood $B$ with $u_0\in B\subset -A$ and $r_\text{min}(u)>0$ for each $u\in B$.  This would imply that $DH(B)$ and $DH(-B)\subset DH(A)$ are smooth. However, this would contradict the previous corollary.  We conclude \eqref{rminClaim}. The conclusion that $r_\text{max}(u)=1$ for $u\in A$ now follows from \eqref{rminrmaxCond}.
\end{proof}

\appendix 

\section{Spindle torus computations}
Fix $0<a< 1$. The circular arc 
$$
(x_1+\sqrt{1-a^2})^2+x_3^2=1, \quad x_1\ge 0
$$
in the $x_1x_3$-plane joins $ae_3$ and $-ae_3$. If we rotate this arc about the $x_3$-axis, we obtain the inner surface $\textup{T}_a$ of a spindle torus. See Figure \ref{SpindleFigure}. The surface $\textup{T}_a$ is described by the equation 
\be\label{SpindleSurface}
\left(\sqrt{x_1^2+x_{2}^2}+
\sqrt{1-a^2}\right)^2+x_3^2= 1.
\ee
It is easy to check that $\text{T}_a$ bounds a strictly convex body.  
\begin{rem}
Recall the region $ B(c_y)$ defined by \eqref{BcxSpindleFormula}, where $y=(y_1,y_3)$ is vertex of a Reuleaux polygon in $x_1x_3$-plane which is symmetric with respect to the $x_3$-axis. Observe that the boundary of $B(c_y)$ is $T_{\sqrt{1-y_1^2}}+y_3e_3$.
\end{rem}

\par The surface $\textup{T}_a$ is smooth away from its endpoints $\pm a e_3$, 
and $|x_3|<a$ corresponds to a smooth point on this surface. In this case, the outward unit normal at $x$ is 
$$
u=\left(\left(\sqrt{x_1^2+x_{2}^2}+
\sqrt{1-a^2}\right)\frac{x_1}{\sqrt{x_1^2+x_2^2}},\left(\sqrt{x_1^2+x_{2}^2}+
\sqrt{1-a^2}\right)\frac{x_1}{\sqrt{x_1^2+x_2^2}},x_3 \right).
$$
Moreover, direct computation gives
$$
u\cdot x=1-\sqrt{1-a^2}\sqrt{u_1^2+u_2^2}.
$$
Unit normals with $u_3\ge a$ or $u_3\le -a$ will support $\text{T}_a$ at $ae_3$ or $-ae_3$, respectively. 
\begin{figure}[h]
     \centering
         \includegraphics[width=.7\textwidth]{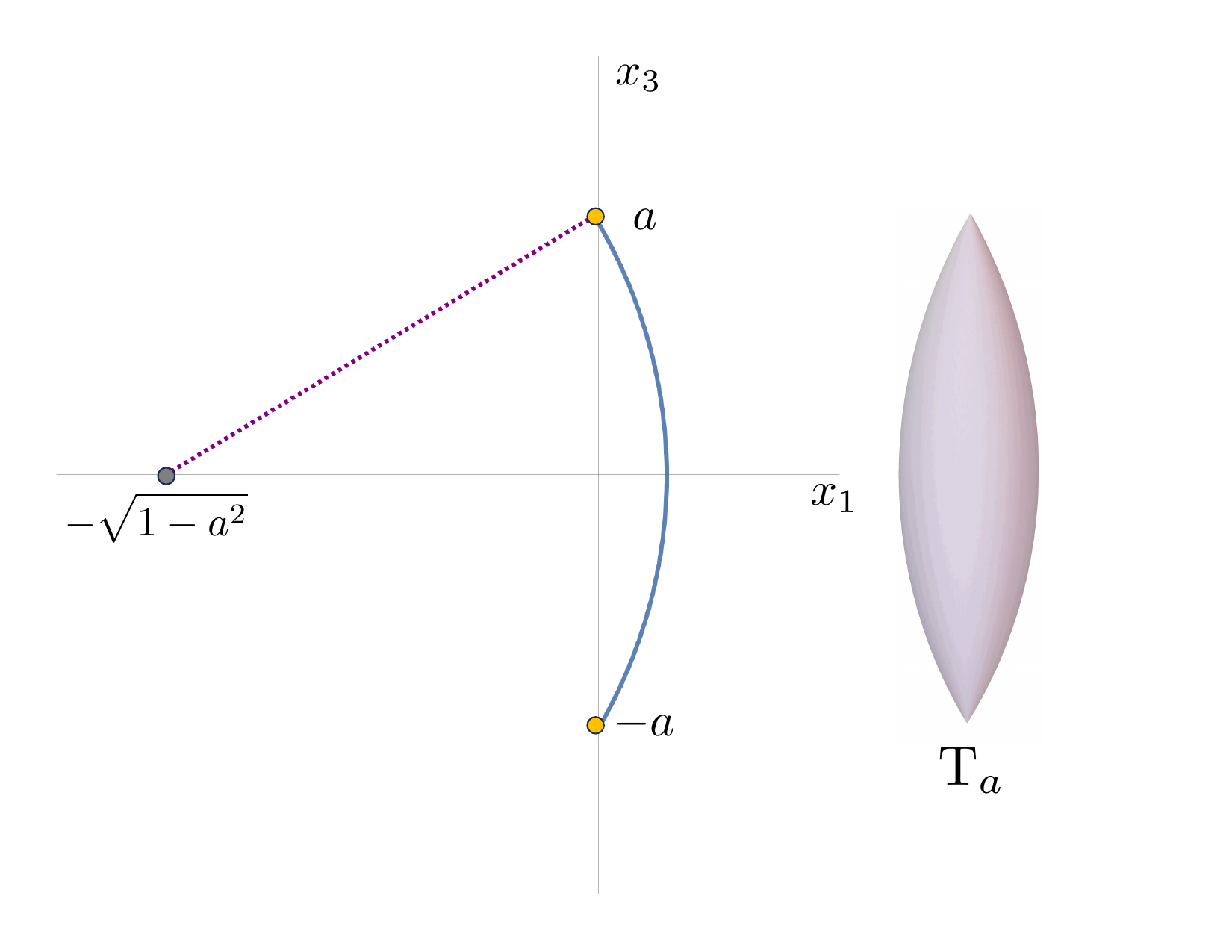}
         \caption{On the left is the generating curve for the surface described by the equation \eqref{SpindleSurface}. The corresponding surface of revolution  $\textup{T}_a$ is shown on the right. }\label{SpindleFigure}
\end{figure}
\par We summarize the observations made above in the following proposition. 
\begin{prop}\label{SuppFunctionSpa}
The support function of the region bounded by $\textup{T}_a$ is 
$$
H(u)
=\begin{cases}
au_3, \quad u_3/|u|\ge a\\\\
|u|-\sqrt{1-a^2}\sqrt{u_1^2+u_2^2}, \quad -a\le u_3/|u|\le a\\\\
-au_3, \quad u_3/|u|\le -a
\end{cases}
$$
for $u\neq 0$.
\end{prop}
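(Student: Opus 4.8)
The plan is to assemble the elementary facts recorded in the paragraphs immediately preceding the statement. Because the support function of a convex body is positively homogeneous, it suffices to compute $H$ on $\Stwo$ and then extend by $H(u)=|u|\,H(u/|u|)$; this extension is exactly what turns the three cases $u_3\ge a$, $-a\le u_3\le a$, $u_3\le -a$ on $\Stwo$ into the cases on $u_3/|u|$ appearing in the statement and restores the correct degree of homogeneity in each branch. So I would fix a unit vector $u$, let $K$ be the region bounded by $\textup{T}_a$ (compact and strictly convex, as observed above), and use that $H(u)=\max_{x\in K}x\cdot u$; in particular, if $x\in\partial K$ is a smooth point whose outward unit normal is $u$, then the tangent plane at $x$ supports $K$ and $H(u)=x\cdot u$.

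Next I would run the case analysis. If $u_3\ge a$ then, by the observation preceding the statement that such normals support $\textup{T}_a$ at $ae_3$, one gets $H(u)=ae_3\cdot u=au_3$, and symmetrically $H(u)=-au_3$ when $u_3\le -a$. For $|u_3|<a$ I would exhibit the smooth point of $\textup{T}_a$ carrying outward normal $u$: the vertical component of the displayed formula for the outward unit normal forces $x_3=u_3$, while matching its horizontal magnitude forces $\sqrt{x_1^2+x_2^2}=\sqrt{u_1^2+u_2^2}-\sqrt{1-a^2}=\sqrt{1-u_3^2}-\sqrt{1-a^2}$ (using $|u|=1$), which is strictly positive precisely because $|u_3|<a$; one then lets $(x_1,x_2)$ be the corresponding nonnegative multiple of $(u_1,u_2)$, which is legitimate since $u_1^2+u_2^2>0$ when $|u_3|<a<1$. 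A direct substitution into \eqref{SpindleSurface} shows that this $x$ lies on $\textup{T}_a$ and satisfies $|x_3|<a$, hence is a smooth point, so $H(u)=x\cdot u$, and the identity $u\cdot x=1-\sqrt{1-a^2}\sqrt{u_1^2+u_2^2}$ recorded above gives the middle branch.

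Finally I would reconcile the branches on the overlap $u_3=\pm a$, where $\sqrt{u_1^2+u_2^2}=\sqrt{1-a^2}$ and so the middle expression equals $1-(1-a^2)=a^2=a|u_3|$, matching the polar values; continuity of $H$ then shows the middle formula remains valid on the closed band $|u_3|\le a$. Extending by homogeneity produces the stated piecewise formula for every $u\neq 0$. I do not expect a real obstacle here: the formula for the outward normal, the value of $u\cdot x$ at a smooth point, and the support behaviour of $\textup{T}_a$ at $\pm ae_3$ are all established above. The only steps that genuinely require care are checking that the point constructed in the band $|u_3|<a$ actually lies on $\textup{T}_a$ and is non-singular — since the normal formula and the expression for $u\cdot x$ are only valid at points with $|x_3|<a$ — and verifying that the three branches agree along $u_3=\pm a$, which is the short computation just indicated.
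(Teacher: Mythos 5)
Your proposal is correct and follows essentially the same route as the paper, whose ``proof'' is precisely the preceding observations (the explicit outward unit normal on the smooth band $|x_3|<a$, the identity $u\cdot x=1-\sqrt{1-a^2}\sqrt{u_1^2+u_2^2}$, and the fact that normals with $|u_3/|u||\ge a$ support $\textup{T}_a$ at $\pm ae_3$), assembled via $H(u)=x\cdot u$ at the supporting point and positive homogeneity. Your added checks — solving for the boundary point with prescribed normal in the band, verifying it lies on $\textup{T}_a$ with $|x_3|<a$, and matching the branches at $u_3/|u|=\pm a$ — are exactly the details the paper leaves implicit.
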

The corollary below implies that the maximum principal radius of curvature of $\textup{T}_a$ is equal to one and
the minimum principal radius of curvature is positive and less than one.  This is also true for the inner part of 
any spindle torus in $\R^3$. 
\begin{cor}\label{RSpindleTorus}
Suppose $u\in \Stwo$ with $-a<u_3<a$ and set $R(u)=\nabla(DH)(u)$. Then 
$$
R(u)(e_3-u_3u)=e_3-u_3u
$$
and
$$
R(u)u\times e_3=\left(1-\frac{\sqrt{1-a^2}}{\sqrt{1-u_3^2}}\right)u\times e_3.
$$
\end{cor}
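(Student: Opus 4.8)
The plan is to reduce the statement to a computation with the Euclidean Hessian of the one-homogeneous support function $H$ and then verify the two eigenvector identities directly. Since $-a<u_3<a$, Proposition \ref{SuppFunctionSpa} gives $H(v)=|v|-\sqrt{1-a^2}\,\sqrt{v_1^2+v_2^2}$ for every $v$ in a neighborhood of $u$ (the locus $v_1=v_2=0$ is avoided, so $H$ is smooth there), hence $DH=\nabla H$ is smooth near $u$. Because $H$ is positively homogeneous of degree one, $\nabla H$ is homogeneous of degree zero, so $D^2H(u)\,u=0$, and for $v\in u^\perp$ symmetry gives $(D^2H(u)v)\cdot u=v\cdot D^2H(u)u=0$. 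Thus $D^2H(u)$ restricts to a symmetric operator on $u^\perp$, and this restriction is exactly $\nabla(DH)(u)=R(u)$: differentiating $u\mapsto DH(u)$ along a curve in $\Stwo$ through $u$ with velocity $v$ returns $D^2H(u)v$ (this also matches \eqref{HessianComputation}).

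First I would compute the Euclidean Hessian. Writing $\rho=\sqrt{u_1^2+u_2^2}$, $w=(u_1,u_2,0)$, and $P=e_1\otimes e_1+e_2\otimes e_2$, one has $D^2|v|=\tfrac{1}{|v|}\bigl(I-\tfrac{v\otimes v}{|v|^2}\bigr)$ and $D^2\sqrt{v_1^2+v_2^2}=\tfrac1\rho P-\tfrac{1}{\rho^3}\,w\otimes w$, so at $|u|=1$
$$
D^2H(u)=\bigl(I-u\otimes u\bigr)-\sqrt{1-a^2}\,\Bigl(\tfrac1\rho P-\tfrac{1}{\rho^3}\,w\otimes w\Bigr).
$$
Next I would record that $\{e_3-u_3u,\ u\times e_3\}$ is an orthogonal basis of $u^\perp$: the two vectors are orthogonal since $(e_3-u_3u)\cdot(u\times e_3)=0$, and both are nonzero because $|u_3|<a<1$ forces $u\neq\pm e_3$. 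Then I apply $D^2H(u)$ to each. For $u\times e_3=(u_2,-u_1,0)$ one uses $u\cdot(u\times e_3)=0$, $P(u\times e_3)=u\times e_3$, and $w\cdot(u\times e_3)=0$, which yields $D^2H(u)(u\times e_3)=\bigl(1-\tfrac{\sqrt{1-a^2}}{\rho}\bigr)(u\times e_3)$. For $e_3-u_3u$ one uses $u\cdot(e_3-u_3u)=0$, $P(e_3-u_3u)=-u_3w$, and $w\cdot(e_3-u_3u)=-u_3\rho^2$; the last two make the two $\sqrt{v_1^2+v_2^2}$-contributions cancel exactly, leaving $D^2H(u)(e_3-u_3u)=e_3-u_3u$. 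Substituting $\rho=\sqrt{1-u_3^2}$ gives the two displayed formulas.

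There is no substantial obstacle beyond bookkeeping; the two points worth stating with care are the identification $R(u)=D^2H(u)|_{u^\perp}$, which rests only on the one-homogeneity of $H$, and the exact cancellation of the spindle-torus term against the sphere term when $D^2H(u)$ is applied to the meridian direction $e_3-u_3u$. That cancellation is exactly why $e_3-u_3u$ is a principal direction with radius of curvature one, while the azimuthal direction $u\times e_3$ keeps the radius $1-\sqrt{1-a^2}/\sqrt{1-u_3^2}$, which lies strictly between $0$ and $1$ because $|u_3|<a$.
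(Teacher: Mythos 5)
Your proof is correct and follows essentially the same route as the paper: differentiate the explicit support function from Proposition \ref{SuppFunctionSpa} twice and apply the resulting operator to the orthogonal directions $e_3-u_3u$ and $u\times e_3$, the only differences being notational (rank-one tensors and $P=e_1\otimes e_1+e_2\otimes e_2$ in place of the paper's projection operators $P_{e_3^\perp}$, etc.) and your explicit justification, via one-homogeneity, that $R(u)$ is the restriction of the Euclidean Hessian $D^2H(u)$ to $u^\perp$.
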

\begin{proof}
We will use the notation $P_{b^\perp}$ to denote orthogonal projection onto $b^\perp$ for $b\neq 0$. The previous proposition implies 
$$
H(v)=|v|-\sqrt{1-a^2}|P_{e_3^\perp}v|
$$
when $-a<v_3/|v|<a$. Direct computation of the gradient of $DH$ at $v=u$ gives
$$
R(u)=\id_{u^\perp}-\sqrt{1-a^2}\frac{P_{P_{e_{3}^\perp}u}}{|P_{e_3^\perp}u|}P_{e_3^\perp}.
$$

\par Since $P_{u^\perp} e_3=e_3-(u\cdot e_3)u\in u^\perp$ and 
$$
P_{P_{e_3^\perp}u}P_{e_3^\perp}(P_{u^\perp} e_3)=-(u\cdot e_3)P_{P_{e_3^\perp}u}P_{e_3^\perp}u=0,
$$
it follows that $R(u)P_{u^\perp} e_3=P_{u^\perp} e_3$.  Also note that as $u\times e_3$ is orthogonal to both $u$ and $e_3$, it is also orthogonal to $P_{u^\perp} e_3$. Therefore, 
$$
P_{P_{e_3^\perp}u}P_{e_3^\perp}(u\times e_3)=P_{P_{e_3^\perp}u}u\times e_3=u\times e_3,
$$  
and
$$
R(u)(u\times e_3)=\left(1-\frac{\sqrt{1-a^2}}{|P_{e_3^\perp}u|}\right)u\times e_3.
$$
Finally, we note $|P_{e_3^\perp}u|=\sqrt{u_1^2+u_2^2}=\sqrt{1-u_3^2}$.
\end{proof}

\bibliography{ESbib}{}

\bibliographystyle{plain}

\typeout{get arXiv to do 4 passes: Label(s) may have changed. Rerun}

\end{document}